\newtheorem{thm}{Theorem}[section]
\newtheorem{prop}[thm]{Proposition}
\newtheorem{lem}[thm]{Lemma}
\newtheorem{cor}[thm]{Corollary}
\theoremstyle{definition}
\theoremstyle{remark}
\newtheorem{rem}[thm]{Remark}
\newcommand{\pt}{{\rm pt}}
\newcommand{\conn}{{\rm conn}}
\title{Matching complexes of polygonal line tilings}
\author{Takahiro Matsushita\thanks{Supported by JSPS KAKENHI 19K14536.}\\
Department of Mathematical Sciences, The University of The Ryukyus,\\ Nishihara-cho, Okinawa, Japan\\
\tt mtst@sci.u-ryukyu.ac.jp}
\begin{document}

\baselineskip.525cm

\maketitle

\begin{abstract}
The matching complex of a simple graph $G$ is the simplicial complex consisting of the matchings on $G$. Jeli\'c Milutinovi\'c et al.~\cite{JMMV} studied the matching complexes of the polygonal line tilings, and they gave a lower bound for the connectivity of the matching complexes of polygonal line tilings. In this paper, we determine the homotopy types of the matching complexes of polygonal line tilings recursively, and determine their connectivities.

\bigskip
\noindent {\bf Mathematical Subject of Classification:} Primary:55P10, Secondary:05C69, 05C70.
\end{abstract}

\section{Introduction}

A matching on a simple graph $G$ is a set of edges in $G$ which have no common vertex. The matching complex $M(G)$ is the simplicial complex whose vertex set is the edge set $E(G)$ and whose simplices are the matchings on $G$.

Bouc \cite{Bouc} introduced the matching complexes of complete graphs in the context of the Brown complexes and Quillen complexes of finite groups. The matching complexes of complete bipartite graphs are called chessboard complexes. There are many works on these two complexes, and we refer to \cite{Wachs} for an introduction to this subject. The integral homology groups of these complexes have many torsions (see \cite{SW}) in general, and hence it is very difficult to determine the homotopy types of matching complexes.

On the other hand, studies on homotopy types of matching complexes of other graphs have sporadically appeared. First Kozlov \cite{Kozlov} determined the homotopy types of matching complexes of path graphs and cycle graphs. Marietti and Testa showed that the matching complexes of forests are homotopy equivalent to wedges of spheres. Jonsson studied the topology of the matching complexes of grid graphs in his unpublished work \cite{Jonsson}, and the author \cite{Matsushita} determined the homotopy types of the matching complexes of $(t \times 2)$-grid graphs, and showed that the matching complex of $(t \times 2)$-grid graphs are homotopy equivalent to wedges of spheres (see also Braun and Hough \cite{BH}).

Recently, Jeli\'c Milutinovi\'c et al.~\cite{JMMV} studied the matching complexes of polygonal line tilings. Polygonal line tilings are a {generalization} of $(t \times 2)$-grid graphs and some of honeycomb graphs defined as follows: For an integer $n$ greater than $2$ and a non-negative integer $t$, the graph $P_{n,t}$ of $t$ $(2n)$-gons is the graph in which $t$ $(2n)$-gons are arranged in a straight line as shown in Figure 1. Namely, the graph of $t$ $4$-gons is the $((t+1) \times 2)$-grid graph, and the graph of $t$ $6$-gons is {called} the $(1 \times 1 \times t)$-honeycomb graph in \cite{JMMV}. Using discrete Morse theory, they showed that the matching complexes of polygonal line tilings are homotopy equivalent to much smaller complexes, and gave lower bounds for the connectivities of these complexes. In particular, they determined the homotopy types of $M(P_{3m+1,t})$ for every positive integer $m.$

\begin{figure}[t]
\begin{center}
\begin{picture}(220,130)(0,0)
\multiput(0,90)(40,0){6}{\circle*{3}}
\multiput(0,110)(40,0){6}{\circle*{3}}
\multiput(20,80)(40,0){5}{\circle*{3}}
\multiput(20,120)(40,0){5}{\circle*{3}}
\multiput(0,90)(40,0){5}{\line(2,-1){20}}
\multiput(0,90)(40,0){6}{\line(0,1){20}}
\multiput(0,110)(40,0){5}{\line(2,1){20}}
\multiput(40,90)(40,0){5}{\line(-2,-1){20}}
\multiput(40,110)(40,0){5}{\line(-2,1){20}}

\put(220,95){$t = 5, n = 3$}

\multiput(10,25)(45,0){5}{\circle*{3}}
\multiput(10,40)(45,0){5}{\circle*{3}}
\multiput(25,10)(45,0){4}{\circle*{3}}
\multiput(40,10)(45,0){4}{\circle*{3}}
\multiput(25,55)(45,0){4}{\circle*{3}}
\multiput(40,55)(45,0){4}{\circle*{3}}
\multiput(10,25)(45,0){4}{\line(1,-1){15}}
\multiput(10,25)(45,0){5}{\line(0,1){15}}
\multiput(10,40)(45,0){4}{\line(1,1){15}}
\multiput(25,10)(45,0){4}{\line(1,0){15}}
\multiput(25,55)(45,0){4}{\line(1,0){15}}
\multiput(55,25)(45,0){4}{\line(-1,-1){15}}
\multiput(55,40)(45,0){4}{\line(-1,1){15}}

\put(210,28){$t = 4, n = 4$}
\end{picture}

{\bf Figure 1.}
\end{center}
\end{figure}

The purpose in this paper is to determine the homotopy types of matching complexes of polygonal line tilings. It turns out that these complexes are homotopy equivalent to wedges of spheres, and we determine the precise connectivities of these complexes. To explain our main results precisely, we need some preparation.

The independence complex of a graph $G$ is the simplicial complex whose simplices are the independent sets of $G$. The line graph of $G$ is the graph $L(G)$ whose vertex set is the edge set $E(G)$ of $G$ and two elements of $E(G)$ are adjacent if they have a common vertex. Then the matching complex $M(G)$ is the independence complex $I(L(G))$ of the line graph.

Let $G_{n,t}$ be the line graph of the graph $P_{n,t}$ of $t$ $(2n)$-gons. Thus we have $I(G_{n,t}) = M(P_{n,t})$. We can formulate the graph $G_{n,t}$ as follows: The vertex set of $G_{n,t}$ is
$$V(G_{n,t}) = \{ a_0, \cdots, a_t\} \cup \{ b_{i,j}, c_{i,j} \; | \; 1 \le i \le t, \; 1 \le j < n\}.$$
The adjacent relation of $G_{n,t}$ is formulated as follows:
\begin{itemize}
\item $a_i \sim b_{i+1,1}$, {$a_i \sim c_{i+1,1}$}, {$a_{i+1} \sim b_{i+1,n-1}$}, and $a_{i+1} \sim c_{i+1, n-1}$ for $i = 0, \cdots, t-1$.

\item $b_{i,j} \sim b_{i, j + 1}$ and $c_{i,j} \sim c_{i, j + 1}$ for $i = 1, \cdots, t$ and $j = 1, \cdots, n-2$.

\item $b_{i,n-1} \sim b_{i+1, 1}$ and $c_{i, n -1} \sim c_{i+1,1}$ for {$i = 1, \cdots, t-1$}.
\end{itemize}
Figure 2 depicts the graph $G_{n,t}$ when $n = 4$ and $t = 3$. Here we consider that $G_{n,0}$ is the graph with one vertex $a_0$. Thus $I(G_{n,0})$ is a point. Using this notation, our main theorems are formulated as follows:

\begin{thm}[Theorem \ref{thm 4.1.1} and Proposition \ref{prop 4.1.2}] \label{main thm 1}
Suppose that $n = 3m + 2$ for some non-negative integer $m$. Then the following hold:
\begin{itemize}
\item[(1)] For $t \ge 4$, there is a homotopy equivalence
$$I(G_{n,t}) \simeq \Sigma^{6m+2} I(G_{n,t-3}) \vee \Sigma^{6m+2} I(G_{n, t - 3}) \vee \Sigma^{8m+3} I(G_{n, t-4}).$$

\item[(2)] There are homotopy equivalences
$$I(G_{n,1}) \simeq S^{2m}, \; I(G_{n,2}) \simeq S^{4m+1} \vee S^{4m+1}, \; I(G_{n,3}) \simeq S^{6m+2}.$$
\end{itemize}
\end{thm}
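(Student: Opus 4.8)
The plan is to dispose of part~(2) by direct computation and to prove part~(1) by induction on $t$, repeatedly applying four standard tools for independence complexes. First, the cofibre sequence $I(G-N[v]) \to I(G-v) \to I(G)$ attached to a vertex $v$: here $I(G)$ is the union of $I(G-v)$ with the cone $\{v\} * I(G-N[v])$ glued along the link $I(G-N[v])$, so $I(G)$ is the mapping cone of the inclusion $I(G-N[v]) \hookrightarrow I(G-v)$. Second, the fold lemma: $I(G) \simeq I(G-v)$ whenever $N(u) \subseteq N(v)$ for some $u \neq v$. Third, its splitting refinement: if $v$ has a neighbour $w$ with $N(w) \subseteq N[v]$, then $w$ is a cone point for $I(G-N[v])$ inside $I(G-v)$, so that inclusion is nullhomotopic and $I(G) \simeq I(G-v) \vee \Sigma I(G-N[v])$. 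Fourth, the pendant case $I(G) \simeq \Sigma I(G-N[u])$ when $v$ has degree one with neighbour $u$, together with the join formula $I(G_1 \sqcup G_2) = I(G_1) * I(G_2)$, under which a disjoint path factor on $k$ vertices with $k \equiv 1 \pmod 3$ (whose independence complex is contractible) kills the whole complex.

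For part~(2) I would argue directly. The single-cell graph $G_{n,1}$ is the $2n$-cycle, and since $2n = 6m+4 \equiv 1 \pmod 3$ Kozlov's formula gives $I(G_{n,1}) \simeq S^{2m}$. For $G_{n,2}$ and $G_{n,3}$ I would remove all interior vertices $a_i$ using the free-removal lemma below, which collapses them to the cycles $C_{4n-2}$ and $C_{6n-4}$; as $4n-2 = 3(4m+2)$ and $6n-4 \equiv 2 \pmod 3$, Kozlov's formula yields $S^{4m+1} \vee S^{4m+1}$ and $S^{6m+2}$. Two lemmas drive the recursion and should be established first. (i) \emph{Free removal:} $I(G_{n,t}) \simeq I(G_{n,t}-a_{t-1})$, because $G_{n,t}-N[a_{t-1}]$ splits off a path on $2n-3 = 6m+1 \equiv 1 \pmod 3$ vertices, making its independence complex contractible. (ii) \emph{Cap peeling:} iterating the pendant lemma on the two length-$(n-2)$ path segments of the last cell gives $I(G_{n,s}-N[a_s]) \simeq \Sigma^{2m} I(G_{n,s-1})$.

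The main reduction then runs as follows. Set $\tilde G = G_{n,t}-a_{t-1}$ and apply the cofibre sequence at the free end $a_t$. For the deletion term, once $a_{t-1}$ is gone $a_t$ caps a long path, so peeling the merged last-two-cell $b$- and $c$-paths and then invoking cap peeling on $G_{n,t-2}$ identifies $I(\tilde G - a_t) \simeq \Sigma^{6m+2} I(G_{n,t-3})$. For the link term, the same peeling reduces $I(\tilde G - N[a_t])$ to $\Sigma^{4m} I(G^\flat)$, where $G^\flat$ is $G_{n,t-2}$ with two extra vertices $b_{t-1,1}, c_{t-1,1}$ each forming a triangle on the end cap. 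Since $N(b_{t-1,1}) = \{a_{t-2}, b_{t-2,n-1}\} \subseteq N[a_{t-2}]$, the splitting refinement applies cleanly at $a_{t-2}$, and after one further round of cap peeling this gives $\Sigma I(\tilde G - N[a_t]) \simeq \Sigma^{6m+2} I(G_{n,t-3}) \vee \Sigma^{8m+3} I(G_{n,t-4})$. Assembling the deletion and link terms produces exactly the asserted three-fold wedge.

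The step I expect to be the crux is the outermost splitting, namely showing that the cofibre sequence at $a_t$ genuinely splits. Unlike the inner splitting at $a_{t-2}$, the end vertex $a_t$ has no neighbour dominated by its closed neighbourhood---each of $b_{t,n-1}, c_{t,n-1}$ carries a further neighbour along its path---so the cone criterion does not apply and the connecting map $I(\tilde G - N[a_t]) \to I(\tilde G - a_t)$ must be shown nullhomotopic by hand. For small $t$ this is automatic on connectivity grounds, but once $I(G_{n,t-3})$ ceases to be concentrated in a single dimension the crude dimension count fails; the argument must instead exploit the explicit pendant structure at the end of $\tilde G$, or else pass to reduced homology, run the long exact sequences, and use that the complexes are already known to be wedges of spheres to upgrade the resulting homology splitting to a homotopy splitting. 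Verifying this vanishing, and checking that the residual graphs after each peeling stage are precisely $G_{n,t-3}$ and $G_{n,t-4}$ with their standard end caps, is where the delicate bookkeeping lies.
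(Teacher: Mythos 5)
Your skeleton is sound and much of it runs parallel to the paper: part (2) is handled exactly as the paper does it (free removal of the interior vertices reduces $G_{n,1}, G_{n,2}, G_{n,3}$ to $C_{6m+4}$, $C_{12m+6}$, $C_{18m+8}$, then Kozlov's formula applies), your free-removal lemma (i) is the paper's first reduction $I(G_t)\simeq I(G_t\setminus a_{t-1})$, and your bookkeeping for the deletion and link terms is correct. In fact your treatment of the link term --- peel to $\Sigma^{4m}I(G^\flat)$, where $G^\flat$ is $G_{n,t-2}$ with two triangle caps, then split at $a_{t-2}$ by the cone criterion since $N(b_{t-1,1})\subseteq N[a_{t-2}]$ --- is cleaner than the paper's corresponding step (the paper splits at $a_{t-2}$ \emph{before} splitting at $a_t$, and so must prove a second hard null-homotopy there, in Lemma~\ref{lem 3.2.2}). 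But there is a genuine gap, exactly at the step you flag: you never prove that the inclusion $I(\tilde G\setminus N[a_t])\hookrightarrow I(\tilde G\setminus a_t)$ is null-homotopic, and without that the cofibre sequence at $a_t$ does not split, so part (1) is not established.

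Neither of your suggested fallbacks closes this. The connectivity count fails already at $t=7$: there the link term is homotopy equivalent to $(S^{14m+3})^{\vee 2}\vee S^{14m+4}$ while the deletion term is $(S^{14m+4})^{\vee 2}$, and a map $S^d\to S^d\vee S^d$ certainly need not be null-homotopic. The homological fallback is unsound: a map of simply connected wedges of spheres inducing zero on reduced homology need not be null-homotopic (the Hopf map $S^3\to S^2$), a simply connected cofibre with free homology need not be a wedge of spheres ($\mathbb{C}P^2$), and in any case the wedge-of-spheres structure of the cofibre is what is being proved, so it cannot be invoked. What actually fills the gap in the paper (Lemma~\ref{lem 3.2.2}(1), Lemma~\ref{lem 3.2.3}(1)) is a geometric argument built from \emph{inclusion-induced} homotopy equivalences: Proposition~\ref{prop string} detaches the long strings as free path components, and Corollary~\ref{cor P} lets one slide a path endpoint through a zig-zag $I(P)\hookrightarrow I(P')\hookleftarrow I(P'')$ of inclusions that are homotopy equivalences, replacing the link complex up to homotopy by a subcomplex whose vertices are all non-adjacent to a fixed vertex ($a_{t-3}$) of the deletion graph; that subcomplex lies in the star of $a_{t-3}$, so the composite inclusion is null-homotopic. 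Your pendant-lemma peelings cannot substitute for this, because they are suspension identifications, not inclusions, so they say nothing about the connecting map. Note also that the paper's order of splittings is what makes its trick run: after deleting $a_{t-2}$ the attached strings have $9m+2\equiv 2 \pmod 3$ vertices, as Proposition~\ref{prop string} requires, whereas in your order the strings in $\tilde G\setminus N[a_t]$ have $6m+1\equiv 1\pmod 3$ vertices, so even this mechanism needs an extra detachment step before it applies. Supplying that argument (or reverting to the paper's order and proving both null-homotopies its way) is what your proposal is missing.
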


Note that $I(G_{2,t})$ is the matching complex of the $((t+1) \times 2)$-grid graph. In fact, the case $m = 0$ in Theorem \ref{main thm 1} was proved by the author \cite{Matsushita}.

The corresponding statement in the case $n = 3m$ is a little complicated. Let $H_{n,t}$ be the induced subgraph of $G_{n,t+1}$ whose vertex set is $V(G_{n,t}) \cup \{ b_{t+1,1}, c_{t+1,1}\}$ (see Figure 2). Then the following holds:

\begin{figure}[t]
\begin{center}
\begin{picture}(360,170)(0,0)

\multiput(20,100)(40,0){8}{\circle*{3}}
\multiput(20,160)(40,0){8}{\circle*{3}}
\multiput(0,130)(80,0){5}{\circle*{3}}

\multiput(0,130)(80,0){4}{\line(2,-3){20}}
\multiput(0,130)(80,0){4}{\line(2,3){20}}
\multiput(80,130)(80,0){4}{\line(-2,3){20}}
\multiput(80,130)(80,0){4}{\line(-2,-3){20}}

\multiput(20,100)(0,60){2}{\line(10,0){280}}

\put(350,127){$G_{3,4}$}

\put(-14,128){\small $a_0$}
\put(64,128){\small $a_1$}
\put(144,128){\small $a_2$}
\put(224,128){\small $a_3$}
\put(304,128){\small $a_4$}

\put(15,167){\small $b_{1,1}$}
\put(55,167){\small $b_{1,2}$}
\put(95,167){\small $b_{2,1}$}
\put(135,167){\small $b_{2,2}$}
\put(175,167){\small $b_{3,1}$}
\put(215,167){\small $b_{3,2}$}
\put(255,167){\small $b_{4,1}$}
\put(295,167){\small $b_{4,2}$}

\put(15,90){\small $c_{1,1}$}
\put(55,90){\small $c_{1,2}$}
\put(95,90){\small $c_{2,1}$}
\put(135,90){\small $c_{2,2}$}
\put(175,90){\small $c_{3,1}$}
\put(215,90){\small $c_{3,2}$}
\put(255,90){\small $c_{4,1}$}
\put(295,90){\small $c_{4,2}$}

\put(335,77){\small $b_{5,1}$}
\put(335,0){\small $c_{5,1}$}

\multiput(20,10)(40,0){9}{\circle*{3}}
\multiput(20,70)(40,0){9}{\circle*{3}}
\multiput(0,40)(80,0){5}{\circle*{3}}

\multiput(0,40)(80,0){5}{\line(2,-3){20}}
\multiput(0,40)(80,0){5}{\line(2,3){20}}
\multiput(80,40)(80,0){4}{\line(-2,3){20}}
\multiput(80,40)(80,0){4}{\line(-2,-3){20}}

\multiput(20,10)(0,60){2}{\line(10,0){320}}

\put(350,37){$H_{3,4}$}

\end{picture}

{\bf Figure 2.}
\end{center}
\end{figure}

\begin{thm}[Theorem \ref{thm G_t}, Theorem \ref{thm H_t}, and Proposition \ref{prop 3.2.5}] \label{main thm 2}
Suppose that $n = 3m$ for some positive integer $m$. Then the following hold:
\begin{itemize}
\item[(1)] If $t \ge 4$, then there is a homotopy equivalence
$$I(G_{n,t}) \simeq \Sigma^{4m-2} I(H_{n,t-2}) \vee \Sigma^{6m-2} I(G_{n,t-3}) \vee \Sigma^{8m-3} I(H_{n,t-4}).$$

\item[(2)] If $t \ge 1$, then there is a homotopy equivalence
$$I(H_{n,t}) \simeq \Sigma^{2m}I(G_{n,t-1}) \vee \Sigma^{2m-1} I(H_{n,t-1}).$$

\item[(3)] There are homotopy equivalences
$$I(G_{n,1}) \simeq S^{2m-1} \vee S^{2m-1}, \; I(G_{n,2}) \simeq S^{4m-2} \vee S^{4m-2}, \; I(G_{n,3}) \simeq S^{6m-2} \vee S^{6m-3}, \; I(H_0) \simeq S^0.$$
\end{itemize}
\end{thm}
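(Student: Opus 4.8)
My plan is to establish the two recursive homotopy equivalences (1) and (2) directly, by manipulating the independence complex near the right-hand end of the graph, and to compute the four base cases in (3) by hand; an easy induction then shows in passing that every $I(G_{n,t})$ and $I(H_{n,t})$ is a wedge of spheres. The whole argument rests on three standard tools for independence complexes. First, for any vertex $v$, coning the link $\mathrm{lk}(v)=I(G\setminus N[v])$ with apex $v$ identifies $I(G)$ with the homotopy cofiber of the inclusion $I(G\setminus N[v])\hookrightarrow I(G\setminus v)$; in particular $I(G)\simeq\Sigma I(G\setminus N[v])$ whenever $I(G\setminus v)$ is contractible, and $I(G)\simeq I(G\setminus v)\vee\Sigma I(G\setminus N[v])$ whenever that inclusion is null-homotopic. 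Second, the fold (domination) lemma: if $N(u)\subseteq N(v)$ for distinct $u,v$ then $I(G)\simeq I(G\setminus v)$, together with its corollaries that $I(G)$ is contractible when $G$ has an isolated vertex and that $I(G)\simeq\Sigma I(G\setminus N[w])$ when $v$ is a leaf with unique neighbor $w$. Third, the join formula $I(G_1\sqcup G_2)=I(G_1)*I(G_2)$ with $X*S^{k}\simeq\Sigma^{k+1}X$, and Kozlov's computation \cite{Kozlov} of the independence complexes of paths and cycles, which gives $I(P_{3m-1})\simeq S^{m-1}$ and $I(C_{6m})\simeq S^{2m-1}\vee S^{2m-1}$. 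These will account for the suspension shifts in the statement.

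For the base cases I would argue as follows. Since $G_{n,1}$ is the line graph of a single $2n$-gon, it is the cycle $C_{2n}=C_{6m}$, so $I(G_{n,1})\simeq S^{2m-1}\vee S^{2m-1}$ is immediate from Kozlov's cycle formula; and $H_{n,0}$ is the path $b_{1,1}-a_0-c_{1,1}$, so $I(H_{n,0})\simeq S^0$. The graphs $G_{n,2}$ and $G_{n,3}$ are small enough that a single application of the cofiber sequence at a cap vertex $a_i$, followed by folding away the exposed leaves, reduces them to disjoint unions of paths of length $n-1=3m-1$; the join formula and Kozlov's path formula then yield the asserted wedges of spheres, including the two distinct dimensions $6m-2$ and $6m-3$ that appear for $G_{n,3}$.

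The recursion proceeds by peeling block $t$ off the right end. The key structural observation is that the right end of $G_{n,t}$ is a capped double path: the degree-two vertex $a_t$ joins the top path $b_{t,1}-\cdots-b_{t,n-1}$ to the bottom path $c_{t,1}-\cdots-c_{t,n-1}$, each attached to the rest of the graph only at its left end (where it meets $a_{t-1}$ in a triangle). For the $H$-recursion (2) I would apply the cofiber sequence at the two stub vertices $b_{t+1,1},c_{t+1,1}$ that distinguish $H_{n,t}$ from $G_{n,t}$, and for the $G$-recursion (1) at $a_t$; then I repeatedly fold the leaf exposed at the free end of each path. Each pass down a length-$(3m-1)$ path contributes one suspension, and the top/bottom pair together produce the shifts in the statement (the off-by-one between $2m$ and $2m-1$ reflecting whether a path is resolved from a free leaf or from the capped end). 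When the dust settles, the residual graph is either a clean copy of $G_{n,s}$ or a copy still carrying two left stubs, i.e.\ a copy of $H_{n,s}$; the two-term wedge in (2) records the two ends of one cofiber sequence, and the three-term wedge in (1) comes from iterating the cofiber sequence once more, its three summands $\Sigma^{4m-2}I(H_{n,t-2})$, $\Sigma^{6m-2}I(G_{n,t-3})$, $\Sigma^{8m-3}I(H_{n,t-4})$ recording the three ways the right end resolves.

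\emph{The main obstacle} will be twofold. First, I must verify that each homotopy cofiber genuinely splits as the stated wedge rather than remaining a nontrivial mapping cone: I need the inclusion $I(G\setminus N[v])\hookrightarrow I(G\setminus v)$ to be null-homotopic for the chosen vertices, which I expect to arrange by exhibiting a further dominating vertex in $G\setminus v$ after the folds, so that the link maps into a contractible piece. Second, the suspension-degree bookkeeping must be done with care, since the counts depend on the residue of $3m-1$ modulo $3$ and on which end a path is resolved from; an off-by-one error there is exactly what separates $2m$ from $2m-1$, or $6m-2$ from $6m-3$. Choosing the order of vertex deletions and folds so that the residual graph lands precisely on a member of the family $\{G_{n,s},H_{n,s}\}$, rather than a near-miss requiring extra correction, is where the bulk of the careful casework will lie.
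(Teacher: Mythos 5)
Your toolkit (cofiber sequence, fold/leaf lemma, joins, Kozlov's path and cycle formulas) is the same one the paper uses, and you correctly identify the crux: proving that the link inclusions are null-homotopic. But the specific place you propose to run the cofiber sequence for recursion (1) cannot work, and the failure is not reparable by a cleverer null-homotopy argument. Run your own plan: in $G_t$, cofiber at $a_t$. Folding the two pendant paths shows $I(G_t \setminus N[a_t]) \simeq \Sigma^{2m-2} I(H_{t-1})$ and (after the string deletions at $b_{t,1}$, $c_{t,1}$) $I(G_t \setminus a_t) \simeq \Sigma^{2m} I(G_{t-1} \setminus N[a_{t-1}]) \simeq \Sigma^{4m-2} I(H_{t-2})$. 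If the inclusion of the link were null-homotopic, you would get $I(G_t) \simeq \Sigma^{4m-2} I(H_{t-2}) \vee \Sigma^{2m-1} I(H_{t-1})$. Now test $t = 4$: using (2) and (3), $I(H_2) \simeq S^{4m-1} \vee S^{4m-1} \vee S^{4m-2}$ and $I(H_3) \simeq S^{6m-2} \vee S^{6m-2} \vee S^{6m-2} \vee S^{6m-2} \vee S^{6m-3}$, so your wedge would be six copies of $S^{8m-3}$ and two of $S^{8m-4}$, whereas the formula you are trying to prove gives $\Sigma^{4m-2} I(H_2) \vee \Sigma^{6m-2} I(G_1) \vee \Sigma^{8m-3} I(H_0)$, i.e.\ five copies of $S^{8m-3}$ and one of $S^{8m-4}$. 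The Betti numbers disagree, so the inclusion $I(G_t \setminus N[a_t]) \hookrightarrow I(G_t \setminus a_t)$ is essential: no ``further dominating vertex'' can make it null-homotopic. This is exactly why the paper uses a different order: cofiber at $a_{t-1}$ first (there the link is $\Sigma^{4m-3} I(H_{t-2})$ and the null-homotopy is proved geometrically, by deforming the link into the star of $c_{t-1,1}$ via the equivalence $I(P_{3k+2}) \hookrightarrow I(P_{3k+3})$), then delete $a_{t-2}$ for free (its link acquires a $P_{12m-5}$ component, hence is contractible), and only then cofiber at $a_t$. That order, not yours, is what produces the three stated summands $\Sigma^{4m-2} I(H_{t-2})$, $\Sigma^{6m-2} I(G_{t-3})$, $\Sigma^{8m-3} I(H_{t-4})$.

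Your plan for recursion (2) has a parallel problem: cofibering at the stubs $b_{t+1,1}, c_{t+1,1}$ relates $I(H_t)$ to $I(G_t)$ at the \emph{same} index $t$ (deleting both stubs returns $G_t$), and the intermediate graph ($G_t$ plus a single stub) lies outside the family $\{G_s, H_s\}$, contrary to your claim that the residual graphs always land in that family; so this route cannot yield the stated formula in terms of $G_{t-1}$ and $H_{t-1}$. The paper instead invokes Engstr\"om's simplicial-vertex theorem at $b_{t+1,1}$, a tool missing from your list, to split $I(H_t) \simeq \Sigma I(H_t \setminus N[a_t]) \vee \Sigma I(H_t \setminus N[b_{t,3m-1}])$. (Your toolkit would actually suffice for (2) if you cofiber at $a_t$ instead: the link $I(H_t \setminus N[a_t])$ involves no neighbor of $b_{t+1,1}$, hence lies in the star of $b_{t+1,1}$ inside $I(H_t \setminus a_t)$, so the inclusion is null-homotopic for free, and leaf folds give $I(H_t \setminus a_t) \simeq \Sigma^{2m} I(G_{t-1})$ and $\Sigma I(H_t \setminus N[a_t]) \simeq \Sigma^{2m-1} I(H_{t-1})$.) Your base cases are essentially fine, though in detail $G_2 \setminus a_1$ and the reduced form of $G_3$ are the cycles $C_{12m-2}$ and $C_{18m-4}$, not disjoint unions of paths, and $G_3$ needs two cofiber steps rather than one. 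The substantive gap is the choice and order of the vertices at which the cofiber sequences are applied: in this problem the wedge decomposition one obtains, and even whether one obtains a wedge at all, depends delicately on that order.
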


Here we mention the connectivity of the case $n = 3$. In this case, the polygonal line tiling is $(1 \times 1 \times t)$-honeycomb graph. In \cite{Jonsson book} Jonsson considered that the study of the matching complexes of honeycomb graphs seems to be interesting, and Jecli\'c Milutinovi\'c et al.~first studied them and showed that the matching complex of $(1 \times 1 \times t)$-honeycomb graph is $(t- 1)$-connected. In fact, Theorem \ref{main thm 2} implies that their estimate is strict:

\begin{cor}[Corollary \ref{cor conn honeycomb}]
The connectivity of the matching complex of $(1 \times 1 \times t)$-honeycomb graph is $t - 1$.
\end{cor}

Theorem \ref{main thm 1} and Theorem \ref{main thm 2} imply that the complex $I(G_{n,t})$ is a wedge of spheres when $n = 3m$ or $n = 3m+2$. Since Jeli\'c Milutinovi\'c et al.~\cite{JMMV} showed that $I(G_{n,t})$ is homotopy equivalent to a wedge of spheres when $n = 3m+1$, we conclude the following:

\begin{cor}
The matching complexes of polygonal line tilings are wedges of spheres.
\end{cor}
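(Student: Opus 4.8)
The plan is to reduce the statement to the three main results by a case analysis on the residue of $n$ modulo $3$, followed by an induction on $t$. Since a polygonal line tiling $P_{n,t}$ requires $n > 2$, every relevant $n$ satisfies exactly one of $n = 3m+1$, $n = 3m+2$, or $n = 3m$. In the first case the conclusion is precisely the result of Jeli\'c Milutinovi\'c et al.~\cite{JMMV} quoted above, so nothing remains to be done, and it suffices to treat the cases $n = 3m+2$ and $n = 3m$. For these I would show that $I(G_{n,t})$, together with $I(H_{n,t})$ in the second case, is a (finite) wedge of spheres for every $t$, which by $M(P_{n,t}) = I(G_{n,t})$ gives the corollary.

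The two facts that make the induction run are the standard closure properties of the class of wedges of spheres: the (unreduced) suspension of a wedge of spheres is again a wedge of spheres, since $\Sigma^k\bigl(\bigvee_i S^{d_i}\bigr) \simeq \bigvee_i S^{d_i + k}$, and a finite wedge of wedges of spheres is a wedge of spheres. With these in hand, the case $n = 3m+2$ is immediate: Theorem \ref{main thm 1}(2) gives that $I(G_{n,1})$, $I(G_{n,2})$, and $I(G_{n,3})$ are wedges of spheres, while Theorem \ref{main thm 1}(1) expresses $I(G_{n,t})$ for $t \ge 4$ as a wedge of suspensions of $I(G_{n,t-3})$ and $I(G_{n,t-4})$. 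Strong induction on $t$ then yields the claim for all $t$.

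For $n = 3m$ I would run a simultaneous strong induction on $t$ over the two families $I(G_{n,t})$ and $I(H_{n,t})$. The base data are $I(G_{n,1})$, $I(G_{n,2})$, $I(G_{n,3})$ and $I(H_{n,0}) \simeq S^0$ from Theorem \ref{main thm 2}(3), together with $I(G_{n,0}) = \pt$. In the inductive step, Theorem \ref{main thm 2}(2) rewrites $I(H_{n,t})$ in terms of $I(G_{n,t-1})$ and $I(H_{n,t-1})$, and Theorem \ref{main thm 2}(1) rewrites $I(G_{n,t})$ for $t \ge 4$ in terms of $I(H_{n,t-2})$, $I(G_{n,t-3})$, and $I(H_{n,t-4})$; in both cases every term on the right has smaller index $t$ and is a wedge of spheres by the inductive hypothesis, so the closure properties finish the step.

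The only point requiring care, and the one I would flag as the main (if modest) obstacle, is the degenerate base case $I(G_{n,0}) = \pt$: a point is contractible rather than a wedge of positive-dimensional spheres, so one must adopt the convention that a contractible space is the empty wedge and check that it interacts correctly with the recursions. Concretely, $\Sigma^k(\pt)$ is contractible and $X \vee \pt \simeq X$, so whenever a recursion reaches the $t = 0$ term that summand simply drops out and does not disturb the conclusion. Once this bookkeeping is settled the induction closes, and combining the three residue classes shows that every matching complex $M(P_{n,t}) = I(G_{n,t})$ is a wedge of spheres.
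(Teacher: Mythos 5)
Your proposal is correct and follows essentially the same route as the paper: the corollary is deduced there directly from Theorem \ref{main thm 1}, Theorem \ref{main thm 2}, and the result of Jeli\'c Milutinovi\'c et al.~for $n = 3m+1$, exactly the case split and (implicit) induction on $t$ that you carry out. You merely make explicit what the paper leaves tacit --- the closure of wedges of spheres under suspension and wedge, and the convention that the contractible base case $I(G_{n,0}) = \pt$ is the empty wedge --- which is sound bookkeeping rather than a different argument.
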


The rest in this paper is organized as follows: In Section 3, we review necessary facts concerning independence complexes. In Section 3, we determine the homotopy types of the matching complexes of polygonal line tilings. Since the proofs are different in the cases $n = 3m, 3m+1, 3m+2$, we divide this section into three parts. In Subsection 3.1, we determine the homotopy type of $I(G_{n,t})$ in the case $n = 3m+1$. This is an alternative proof of a result in Jeli\'c Milutinovi\'c et al.~\cite{JMMV}. We prove Theorem \ref{main thm 1} and Theorem \ref{main thm 2} in Subsection 3.2 and Subsection 3.3, respectively. In these sections, we determine the connectivity of $I(G_{n,t})$ (Theorem \ref{thm connectivity 1} and Theorem \ref{thm connectivity 2}).

\vspace{2mm}
\noindent {\bf Acknowledgement.} The author thanks Jeli\'c Milutinovi\'c for answering his question concerning her paper \cite{JMMV}. {The author also thanks the referee for useful comments.} The author is supported by JSPS KAKENHI 19K14536.

\section{Independence complexes}

In this section, we review some known results of independence complexes. For basic terminology concerning simplicial complexes in topological combinatorics, we refer to \cite{Jonsson book} or \cite{Kozlov book}.

Throughout the paper, graphs are assumed to be finite and simple. For a subset $S$ of the vertex set $V(G)$ of $G$, let $G[S]$ denote the subgraph in $G$ induced by $S$. We write $G \setminus S$ to mean $G [V(G) \setminus S]$. For a vertex $v$ in $G$, we write $G \setminus v$ instead of $G \setminus \{ v\}$.

Recall that a subset $S$ of $V(G)$ is independent if $G[S]$ has no edges. The {\it independence complex $I(G)$ of $G$} is the (abstract) simplicial complex whose vertex set is $V(G)$ and whose simplices are the independent sets of $G$.

The following proposition is easily deduced from the definition.

\begin{prop} \label{prop join}
If $G$ is a disjoint union $G_1 \sqcup G_2$ of subgraphs $G_1$ and $G_2$, then $I(G) = I(G_1) * I(G_2)$.
\end{prop}

Here $*$ denotes the join of simplicial complexes. Thus if $G$ has a connected component whose independence {complex} is contractible, then $I(G)$ is also contractible. In particular, if $G$ has an isolated vertex, then $I(G)$ is contractible.

For a simplicial complex $K$ and a vertex $v$ in $K$, we write $K \setminus v$ to indicate the simplicial complex whose simplices are the simplices in $K$ not containing $v$. Let ${\rm link}_K(v)$ be the link of $K$ at $v$. Clearly, there is a cofiber sequence
$${\rm link}_K(v) \to K \setminus v \to K.$$

Now we apply this cofiber sequence to independence complexes. For a vertex $v$ in $G$, the {\it open neighborhood of $v$} is $N_G(v) = \{ w \in V(G) \; | \; \{ v,w\} \in E(G)\}$, and the {\it closed neighborhood $N_G[v]$} is $N_G(v) \cup \{ v\}$. If we do not need to mention $G$, we often write $N(v)$ (or $N[v]$) instead of $N_G(v)$ (or $N_G[v]$, respectively). It is easy to see that ${\rm link}_{I(G)}(v)$ is $I(G \setminus N_G[v])$ and $I(G) \setminus v = I(G \setminus v)$. Thus we have the following:

\begin{prop}[See Adamaszek \cite{Adamaszek}] \label{prop cofib}
The sequence
$$I(G \setminus N[v]) \hookrightarrow I(G \setminus v) \hookrightarrow I(G).$$
is a cofiber sequence. In particular, if the inclusion $I(G \setminus N[v]) \hookrightarrow I(G \setminus v)$ is null-homotopic, then there is a homotopy equivalence $I(G) \simeq I(G \setminus v) \vee \Sigma I(G \setminus N[v])$.
\end{prop}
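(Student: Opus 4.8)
The plan is to specialize the general cofiber sequence ${\rm link}_K(v) \to K \setminus v \to K$, recorded just above the statement, to the case $K = I(G)$. By the two identifications already noted in the text, namely ${\rm link}_{I(G)}(v) = I(G \setminus N[v])$ and $I(G) \setminus v = I(G \setminus v)$, the general sequence becomes exactly
$$I(G \setminus N[v]) \hookrightarrow I(G \setminus v) \hookrightarrow I(G),$$
with the maps being the evident inclusions of subcomplexes, which proves the first assertion. The structural reason this is a cofiber sequence is that $I(G)$ is the union of $I(G \setminus v)$ with the star ${\rm star}_{I(G)}(v) = v * {\rm link}_{I(G)}(v)$, the two pieces meeting along ${\rm link}_{I(G)}(v)$; since the star is a cone and hence contractible, the resulting pushout square exhibits $I(G)$ as the mapping cone of the inclusion $i \colon I(G \setminus N[v]) \hookrightarrow I(G \setminus v)$.

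For the second assertion, I would invoke the standard fact that the homotopy type of a mapping cone depends only on the homotopy class of its attaching map, together with the computation of the cone of a constant map. If $i$ is null-homotopic, then $i$ is homotopic to the constant map $c$ sending $I(G \setminus N[v])$ to a point of $I(G \setminus v)$, so the mapping cone of $i$ is homotopy equivalent to the mapping cone of $c$. The latter is obtained by collapsing the base of the cone on $I(G \setminus N[v])$ to a single point of $I(G \setminus v)$, which produces precisely the wedge $I(G \setminus v) \vee \Sigma I(G \setminus N[v])$. Combining this with the identification of $I(G)$ as the mapping cone of $i$ yields the desired homotopy equivalence $I(G) \simeq I(G \setminus v) \vee \Sigma I(G \setminus N[v])$.

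The only point that requires care is the passage from ``the cone of a constant map is a wedge'' to an honest homotopy equivalence: this uses that simplicial complexes are well-pointed CW complexes, so that the reduced and unreduced suspensions agree up to homotopy and the inclusion ${\rm link}_{I(G)}(v) \hookrightarrow I(G \setminus v)$ is a cofibration compatible with the pushout. Since all spaces in play are finite simplicial complexes, these cofibrancy hypotheses hold automatically, so no further work is needed. The genuinely substantive input is thus entirely the null-homotopy hypothesis on $i$, which is exactly the condition to be verified case by case in the later sections.
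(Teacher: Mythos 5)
Your proposal is correct and follows essentially the same route as the paper, which likewise obtains the statement by specializing the general cofiber sequence ${\rm link}_K(v) \to K \setminus v \to K$ via the identifications ${\rm link}_{I(G)}(v) = I(G \setminus N[v])$ and $I(G) \setminus v = I(G \setminus v)$. Your additional details (identifying $I(G)$ as the mapping cone of the inclusion and splitting the cone of a null-homotopic map as a wedge) are exactly the standard facts the paper leaves implicit.
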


We need the following two propositions from \cite{Engstrom}.

\begin{prop}[Fold lemma, Lemma 3.2 of \cite{Engstrom}]
Let $v$ and $w$ be distinct vertices in $G$, and suppose $N(w) \subset N(v)$. Then the inclusion $I(G \setminus v) \hookrightarrow I(G)$ is a homotopy equivalence.
\end{prop}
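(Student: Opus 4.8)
The plan is to apply the cofiber sequence of Proposition \ref{prop cofib} to the vertex $v$, and to show that the link term $I(G \setminus N[v])$ is contractible; the fold will then follow because the cofiber of the inclusion becomes contractible. First I would record that $v$ and $w$ are non-adjacent: if $v \sim w$, then $v \in N(w) \subset N(v)$, which is impossible since $G$ is simple and has no loops. In particular $w \notin N[v]$, so $w$ survives as a vertex of the induced subgraph $G \setminus N[v]$.

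The crux of the argument is then the observation that $w$ is isolated in $G \setminus N[v]$. Indeed, every neighbor of $w$ in $G$ lies in $N(w)$, and by hypothesis $N(w) \subset N(v) \subset N[v]$; hence all neighbors of $w$ are deleted in passing to $G \setminus N[v]$, and $w$ is left with no neighbors. By the remark following Proposition \ref{prop join}, a graph possessing an isolated vertex has contractible independence complex, so $I(G \setminus N[v])$ is contractible. Applying Proposition \ref{prop cofib} with this $v$, the inclusion $I(G \setminus N[v]) \hookrightarrow I(G \setminus v)$ is null-homotopic (its domain being contractible), which already gives $I(G) \simeq I(G \setminus v) \vee \Sigma I(G \setminus N[v]) \simeq I(G \setminus v)$.

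The step requiring the most care — and the main obstacle — is to upgrade this to the statement that the \emph{inclusion} $I(G \setminus v) \hookrightarrow I(G)$ itself is the homotopy equivalence, rather than merely an abstract equivalence of spaces. For this I would argue directly with the cofiber sequence: the inclusion of the subcomplex $I(G \setminus v)$ into $I(G)$ is a cofibration, and its cofiber is the quotient $I(G)/I(G \setminus v)$, which is the suspension $\Sigma\, {\rm link}_{I(G)}(v) = \Sigma I(G \setminus N[v])$. Since $I(G \setminus N[v])$ is contractible, this cofiber is contractible. A closed cofibration of CW complexes with contractible cofiber is a homotopy equivalence (via the homotopy extension property together with Whitehead's theorem), so the inclusion $I(G \setminus v) \hookrightarrow I(G)$ is a homotopy equivalence, as claimed.
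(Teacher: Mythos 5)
Your first two paragraphs are correct and identify the right key fact: $v$ and $w$ are non-adjacent, so $w$ survives in $G \setminus N[v]$ and is isolated there, making the link $I(G \setminus N[v])$ a cone with apex $w$, hence contractible; Proposition \ref{prop cofib} then gives an abstract equivalence $I(G) \simeq I(G\setminus v)$. (The paper itself offers no proof to compare against --- it cites Engstr\"om's Lemma 3.2 --- so your argument must stand on its own.) The genuine gap is in your final step: the principle ``a closed cofibration of CW complexes with contractible cofiber is a homotopy equivalence'' is false, and Whitehead's theorem cannot rescue it. A contractible cofiber only gives that the inclusion induces an isomorphism on homology (via the long exact sequence of the pair), and homology isomorphisms are weak equivalences only under simple-connectivity or nilpotence hypotheses, which you do not have here. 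Concretely: let $A$ be the presentation $2$-complex of the binary icosahedral group $\langle s,t \mid (st)^2 = s^3 = t^5\rangle$, which is acyclic but has nontrivial $\pi_1$, and let $X = CA$ be the cone on $A$. Then $A \hookrightarrow X$ is a closed cofibration of finite CW complexes whose cofiber $X/A \cong \Sigma A$ is simply connected and acyclic, hence contractible, yet the inclusion is not a homotopy equivalence since $\pi_1(A) \neq 1 = \pi_1(X)$.

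The fix is available precisely because your situation is stronger than ``contractible cofiber'': the link itself is contractible. Write $I(G) = I(G\setminus v) \cup \mathrm{star}_{I(G)}(v)$ with $I(G \setminus v) \cap \mathrm{star}_{I(G)}(v) = \mathrm{link}_{I(G)}(v) = I(G\setminus N[v])$. Both the star (a cone with apex $v$) and the link (a cone with apex $w$, by your isolation argument) are contractible, so the subcomplex inclusion $\mathrm{link}_{I(G)}(v) \hookrightarrow \mathrm{star}_{I(G)}(v)$ is a cofibration \emph{and} a homotopy equivalence; since $I(G)$ is the pushout of $I(G\setminus v) \hookleftarrow \mathrm{link}_{I(G)}(v) \hookrightarrow \mathrm{star}_{I(G)}(v)$, the gluing lemma (cobase change of an acyclic cofibration is an acyclic cofibration) shows that $I(G\setminus v) \hookrightarrow I(G)$ is a homotopy equivalence. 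Alternatively, and closer to Engstr\"om's own proof, argue combinatorially: if $\sigma$ is any independent set containing $v$, then $\sigma$ misses $N(v) \supseteq N(w)$ and $w \notin N[v]$, so $\sigma \cup \{w\}$ is again independent; pairing $\sigma \leftrightarrow \sigma \cup \{w\}$ perfectly matches all faces of $I(G)$ outside $I(G\setminus v)$, and this acyclic matching has no critical cells, so $I(G)$ collapses onto $I(G \setminus v)$ and the inclusion is even a deformation retract. Note that this independence of $\sigma \cup \{w\}$ is exactly the observation already implicit in your isolation argument, so your proof is one correct lemma-citation away from being complete.
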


A vertex $v$ in $G$ is {\it simplicial} if $G [N(v)]$ is a complete graph.

\begin{thm}[Theorem 3.7 of \cite{Engstrom}] \label{thm simplicial}
If $v$ is a simplicial vertex in $G$, there is a homotopy equivalence
$$I(G) \simeq \bigvee_{w \in N(v)} \Sigma I(G \setminus N[w]).$$
\end{thm}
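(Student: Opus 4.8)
The plan is to prove the statement by induction on $d = |N(v)|$, using the cofiber sequence of Proposition \ref{prop cofib} at a single neighbor of $v$ at each stage. When $d = 0$ the vertex $v$ is isolated, so $I(G)$ is contractible by Proposition \ref{prop join} while the right-hand side is an empty wedge, hence also a point; this settles the base case. For the inductive step I would fix a neighbor $w_1 \in N(v)$ and analyze $I(G)$ by deleting $w_1$.

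First I would record the two bookkeeping facts that make the induction go through. Since $v$ is simplicial, $N(v)$ is a clique, so $v$ remains simplicial in $G \setminus w_1$ with neighborhood $N(v) \setminus \{w_1\} = \{w_2, \dots, w_d\}$; thus the inductive hypothesis applies to $G \setminus w_1$. Moreover, for each $i \ge 2$ we have $w_1 \in N_G[w_i]$ (again because $N(v)$ is a clique), so $N_{G \setminus w_1}[w_i] = N_G[w_i] \setminus \{w_1\}$ and therefore $(G \setminus w_1) \setminus N_{G \setminus w_1}[w_i] = G \setminus N_G[w_i]$. Consequently the inductive hypothesis yields
$$I(G \setminus w_1) \simeq \bigvee_{i=2}^{d} \Sigma I(G \setminus N[w_i]).$$

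Next I would apply the cofiber sequence of Proposition \ref{prop cofib} at $w_1$, namely $I(G \setminus N[w_1]) \hookrightarrow I(G \setminus w_1) \hookrightarrow I(G)$, and argue that the first inclusion is null-homotopic. The key point, and the only place the simplicial hypothesis is essential, is the containment $N[v] \subseteq N[w_1]$: indeed $N[v] = \{v, w_1, \dots, w_d\}$, and each $w_i$ lies in $N[w_1]$ because $N(v)$ is a clique. Hence every simplex of $I(G \setminus N[w_1])$ consists of vertices avoiding $N[v]$, so adjoining $v$ keeps it independent; that is, $I(G \setminus N[w_1])$ is contained in the star of $v$ in $I(G \setminus w_1)$. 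This star is a cone with apex $v$, hence contractible, so the inclusion factors through a contractible complex and is null-homotopic. Proposition \ref{prop cofib} then gives $I(G) \simeq I(G \setminus w_1) \vee \Sigma I(G \setminus N[w_1])$, and combining this with the displayed equivalence completes the induction.

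The computation is mostly formal once the right reductions are in place; the main obstacle is identifying the correct null-homotopy. It is tempting to try to factor the inclusion $I(G \setminus N[w_1]) \hookrightarrow I(G \setminus w_1)$ through the star of $w_1$, but $w_1$ has been deleted; the essential observation is instead to use the apex $v$, whose star absorbs $I(G \setminus N[w_1])$ precisely because $v$ is simplicial. Care is also needed in checking that deleting $w_1$ does not alter the complexes $I(G \setminus N[w_i])$ for $i \ge 2$, which is what allows the inductively produced wedge summands to match the claimed ones.
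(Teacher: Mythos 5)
Your proof is correct. Note that the paper itself gives no proof of this statement: it is imported as Theorem 3.7 of Engstr\"om \cite{Engstrom}, so there is no internal argument to compare against, and what you have produced is a genuinely self-contained derivation using only the paper's stated toolkit, namely Proposition \ref{prop join} and the cofiber sequence of Proposition \ref{prop cofib}, by induction on $|N(v)|$. All the essential steps check out: since $N(v)$ is a clique, $v$ stays simplicial in $G \setminus w_1$; the identity $(G\setminus w_1)\setminus N_{G\setminus w_1}[w_i] = G \setminus N[w_i]$ holds because $w_1 \in N[w_i]$, so the inductively produced wedge summands are literally the ones claimed; and the containment $N[v] \subseteq N[w_1]$ places $I(G\setminus N[w_1])$ inside the star of $v$ in $I(G\setminus w_1)$, which is a cone and hence gives the required null-homotopy. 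This last mechanism---factoring an inclusion through the contractible star of a vertex all of whose neighbors have been deleted---is exactly the device the paper uses repeatedly in its own computations (see the proofs of Lemma \ref{lem 3.2.2}, Lemma \ref{lem 3.2.3}, and Lemma \ref{lem 3m.1}), so your argument is very much in the spirit of the paper even though the paper never spells one out; it also shows the theorem is not an external black box but a formal consequence of Proposition \ref{prop cofib}. Two minor caveats, both shared with the paper's conventions rather than defects of your argument: degenerate cases (the empty wedge, an empty graph $G \setminus N[w]$, and the convention $\Sigma \emptyset = S^0$) must be interpreted in the standard way for Proposition \ref{prop cofib} and the wedge decomposition to read correctly; and your remark that the simplicial hypothesis is used ``only'' for $N[v] \subseteq N[w_1]$ is slightly loose, since the clique property also drives your two bookkeeping facts---but you do invoke it correctly in all three places.
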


Let $P_k$ be the path graph with $k$ vertices. Namely, $V(P_k) = \{ 1, \cdots, k\}$ and $E(P_k) = \{ \{ i,j\} \; | \; |i - j| = 1\}$. Let $C_k$ be the $k$-cycle graph. We will frequently use the following basic computation by Kozlov \cite{Kozlov}.

\begin{prop}[Proposition 4.6 and 5.2 of Kozlov \cite{Kozlov}]
The following hold:
\begin{itemize}
\item[(1)] $I(P_{3k}) \simeq S^{k-1}$, $I(P_{3k+1}) \simeq \pt$, $I(P_{3k+2}) \simeq S^k$ for $k \ge 0$.

\item[(2)] $I(C_{3k}) \simeq S^{k-1} \vee S^{k-1}$, $I(C_{3k+1}) \simeq S^{k-1}$, $I(C_{3k+2}) \simeq S^k$ for $k \ge 1$.
\end{itemize}
\end{prop}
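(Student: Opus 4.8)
The plan is to treat the two parts in order, using part (1) as the essential input for part (2), and to drive both by the vertex-deletion tools recorded above.

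For the path graphs I would induct on $k$ via the recursion $I(P_k) \simeq \Sigma I(P_{k-3})$. To obtain it, take $v = k$, the end vertex of $P_k$. For $k \ge 3$ its neighborhood $N(v) = \{k-1\}$ is a single vertex, hence trivially a complete graph, so $v$ is simplicial and Theorem \ref{thm simplicial} applies. The wedge is indexed by the single vertex $w = k-1$, whose closed neighborhood is $\{k-2, k-1, k\}$, so $P_k \setminus N[w] = P_{k-3}$ and the theorem yields $I(P_k) \simeq \Sigma I(P_{k-3})$. It then remains to compute the three base cases directly: $I(P_0) = \{\emptyset\} = S^{-1}$, $I(P_1) \simeq \pt$, and $I(P_2) \simeq S^0$ (two points). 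Iterating $k \mapsto k-3$ and using $\Sigma^k S^{-1} = S^{k-1}$, $\Sigma^k \pt \simeq \pt$, and $\Sigma^k S^0 = S^k$ then gives the three formulas of part (1).

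For the cycle graphs I would apply Proposition \ref{prop cofib} at an arbitrary vertex $v$ of $C_k$. Deleting $v$ breaks the cycle into a path, $C_k \setminus v = P_{k-1}$, while $C_k \setminus N[v] = P_{k-3}$, since removing $v$ and its two neighbors leaves $k-3$ consecutive vertices. The proposition supplies the cofiber sequence $I(P_{k-3}) \hookrightarrow I(P_{k-1}) \hookrightarrow I(C_k)$ and, once the first inclusion is shown null-homotopic, the splitting $I(C_k) \simeq I(P_{k-1}) \vee \Sigma I(P_{k-3})$. Substituting the values from part (1) into this splitting reproduces exactly $I(C_{3k}) \simeq S^{k-1} \vee S^{k-1}$, $I(C_{3k+1}) \simeq S^{k-1}$, and $I(C_{3k+2}) \simeq S^k$, so the whole of part (2) reduces to the null-homotopy claim.

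The main point --- and the only step beyond bookkeeping --- is this null-homotopy of $I(P_{k-3}) \hookrightarrow I(P_{k-1})$, which I would dispose of by dimension counting using part (1), splitting on $k \pmod 3$. When $k \equiv 2 \pmod 3$ the target $I(P_{k-1})$ is contractible; when $k \equiv 1 \pmod 3$ the source $I(P_{k-3})$ is contractible; and when $k \equiv 0 \pmod 3$ both are spheres, with $I(P_{k-3}) \simeq S^{(k-6)/3}$ of strictly smaller dimension than $I(P_{k-1}) \simeq S^{(k-3)/3}$, so the inclusion represents a class in $\pi_a(S^b)$ with $a < b$ and is null-homotopic. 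In every case the inclusion is null-homotopic, completing the reduction. The one genuinely degenerate instance is the smallest sphere, where the source is $S^{-1} = \emptyset$ (the case $C_3$); this I would simply check by hand, since $C_3 = K_3$ has independence complex equal to three points, in agreement with $S^0 \vee S^0$. Thus the anticipated obstacle, the behavior of the gluing map, dissolves once the path computation of part (1) is available, which is exactly why I would establish the two parts in this order.
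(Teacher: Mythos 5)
Your proof is correct, but note that the paper never proves this proposition at all: it is imported verbatim from Kozlov \cite{Kozlov} as a citation, so there is no in-paper argument to compare against. Judged on its own merits, your derivation is a legitimate self-contained reproof that stays entirely inside the paper's Section 2 toolkit: applying Theorem \ref{thm simplicial} at an endpoint $v=k$ of $P_k$ (whose neighborhood is a single vertex, hence trivially complete) gives the recursion $I(P_k) \simeq \Sigma I(P_{k-3})$, and applying Proposition \ref{prop cofib} at any vertex of $C_k$ gives the splitting $I(C_k) \simeq I(P_{k-1}) \vee \Sigma I(P_{k-3})$ once the inclusion $I(P_{k-3}) \hookrightarrow I(P_{k-1})$ is null-homotopic, which your case analysis mod $3$ settles correctly (contractible target, contractible source, or a map $S^a \to S^b$ with $a<b$, respectively). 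There is no circularity: neither Engstr\"om's theorem nor Adamaszek's cofiber sequence depends on Kozlov's computation, and you never invoke Corollary \ref{cor P}, which the paper deduces \emph{from} the present proposition. The only delicate points are the degenerate instances involving the void complex, namely $I(P_0)=S^{-1}$, $\Sigma S^{-1}=S^0$, and the cofiber argument for $C_3$; you rightly dispose of $C_3$ by direct inspection, and the same one-line hand check covers $P_3$ if one is uneasy about applying Theorem \ref{thm simplicial} when $G \setminus N[w]$ is empty. In short: a correct proof, and one that is arguably better adapted to this paper than Kozlov's original treatment, since it reuses exactly the two lemmas the paper already needs elsewhere.
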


\begin{cor} \label{cor P}
For $k \ge 0$, the inclusion $I(P_{3k+2}) \hookrightarrow I(P_{3k+3})$ is a homotopy equivalence.
\end{cor}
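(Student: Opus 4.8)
The plan is to obtain the corollary directly from the cofiber sequence of Proposition~\ref{prop cofib}, choosing the deleted vertex to be an endpoint of the longer path. I would identify $P_{3k+2}$ with the induced subgraph of $P_{3k+3}$ on $\{1,\dots,3k+2\}$, and let $v = 3k+3$ be the terminal vertex of $P_{3k+3}$. Since $N(v) = \{3k+2\}$, its closed neighborhood is $N[v] = \{3k+2, 3k+3\}$, so that $P_{3k+3} \setminus v = P_{3k+2}$ and $P_{3k+3} \setminus N[v] = P_{3k+1}$. Applying Proposition~\ref{prop cofib} to $G = P_{3k+3}$ and this choice of $v$ then produces the cofiber sequence
$$I(P_{3k+1}) \hookrightarrow I(P_{3k+2}) \hookrightarrow I(P_{3k+3}),$$
whose right-hand map is precisely the inclusion in the statement.

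The next step is to feed in Kozlov's computation from part~(1) of the preceding proposition, namely $I(P_{3k+1}) \simeq \pt$. Because the link term $I(P_{3k+1})$ is contractible, the inclusion $I(P_{3k+1}) \hookrightarrow I(P_{3k+2})$ is null-homotopic, and the cofiber sequence exhibits $I(P_{3k+3})$ as the mapping cone of this inclusion. Coning off a contractible subcomplex along a cofibration does not alter the homotopy type, so the inclusion $I(P_{3k+2}) \hookrightarrow I(P_{3k+3})$ is a homotopy equivalence, which is the assertion.

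The argument is short once the endpoint is chosen correctly, and I expect the only subtle point to be insisting on a statement about the inclusion map itself rather than merely an abstract equivalence of spaces. The wedge formula in Proposition~\ref{prop cofib} alone would give only $I(P_{3k+3}) \simeq I(P_{3k+2}) \vee \Sigma I(P_{3k+1}) \simeq I(P_{3k+2})$; to conclude that the inclusion \emph{realizes} this equivalence I would read the mapping-cone description directly off the cofiber sequence, where $I(P_{3k+2}) \hookrightarrow I(P_{3k+3})$ is the canonical inclusion of the base of the cone into the cone of a contractible attaching domain.
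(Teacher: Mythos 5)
Your proof is correct and is essentially the paper's own argument: both choose the endpoint $v = 3k+3$, observe that $P_{3k+3} \setminus N[v] = P_{3k+1}$ has contractible independence complex, and conclude from the cofiber sequence of Proposition \ref{prop cofib} that the inclusion $I(P_{3k+2}) \hookrightarrow I(P_{3k+3})$ is a homotopy equivalence. Your final paragraph merely makes explicit what the paper leaves implicit, namely that contractibility of the link term forces the map into the mapping cone itself to be an equivalence, not just an abstract identification of homotopy types.
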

\begin{proof}
Put $v = 3k+3 \in V(P_{3k+3})$. Then $I(P_{3k+3} \setminus N_{P_{3k+3}}[v]) = I(P_{3k+1})$ is contractible. Thus Proposition \ref{prop cofib} implies that  the inclusion $I(P_{3k+2}) \hookrightarrow I(P_{3k+3})$ is a homotopy equivalence.
\end{proof}

Finally, we give a rather technical argument which makes the description much simpler in Section 3. First, we introduce the following terminology: An {\it $n$-string} based at $v$ of a graph $G$ is an induced subgraph $P$ of $G$ which satisfies the following:
\begin{itemize}
\item[(1)] $P$ is isomorphic to $P_n$, and $v$ is an endpoint of $P$.

\item[(2)] Every vertex in $P$ except for $v$ is adjacent to no vertex belonging to $V(G) \setminus V(P)$.
\end{itemize}

\begin{prop} \label{prop string}
Let $G$ be a graph and $P$ a $(3k+2)$-string based at $v$ in $G$. Let $w$ be the vertex in $P$ which is adjacent to $v$, and set $U = N_G(v) \setminus w$. Then the inclusion $I(G \setminus U) \hookrightarrow I(G)$ is a homotopy equivalence.
\end{prop}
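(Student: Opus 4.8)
The plan is to induct on the cardinality of $U$, reducing the whole statement to the deletion of a single vertex. When $U = \emptyset$ there is nothing to prove. For the inductive step fix a vertex $u \in U$. Since $u \in N_G(v) \setminus w$ and $v$ is an endpoint of the induced path $P$ (so its only neighbour inside $P$ is $w$), the vertex $u$ lies outside $V(P)$. I would first show that the inclusion $I(G \setminus u) \hookrightarrow I(G)$ is a homotopy equivalence. Granting this, observe that $P$ is still a $(3k+2)$-string based at $v$ in $G \setminus u$, with associated set $N_{G \setminus u}(v) \setminus w = U \setminus u$; applying the induction hypothesis to $G \setminus u$ then gives that $I(G \setminus U) = I((G \setminus u) \setminus (U \setminus u)) \hookrightarrow I(G \setminus u)$ is a homotopy equivalence, and composing it with $I(G \setminus u) \hookrightarrow I(G)$ closes the induction.

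To handle the single deletion, apply the cofiber sequence of Proposition \ref{prop cofib} at the vertex $u$:
$$ I(G \setminus N[u]) \hookrightarrow I(G \setminus u) \hookrightarrow I(G). $$
It suffices to prove that $I(G \setminus N[u])$ is contractible, for then the cofiber of $I(G \setminus N[u]) \to I(G \setminus u)$ collapses and the inclusion $I(G \setminus u) \hookrightarrow I(G)$ is a homotopy equivalence. Write the string as $v = x_1, w = x_2, x_3, \dots, x_{3k+2}$. The key claim is that $\{x_2, \dots, x_{3k+2}\}$ is a connected component of $G \setminus N[u]$, isomorphic to $P_{3k+1}$. Indeed, since $u \in N_G(v)$ we have $v \in N[u]$, so $v = x_1$ is deleted and the near end of the string is severed from the rest of $G$; and by condition (2) in the definition of a string, none of $x_2, \dots, x_{3k+2}$ has a neighbour outside $V(P)$, so none of them lies in $N[u]$ and none has an edge leaving $\{x_2, \dots, x_{3k+2}\}$ in $G \setminus N[u]$ (the only potential escape, the edge $x_2 x_1$, has disappeared with $v$). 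Hence by Proposition \ref{prop join} the complex $I(G \setminus N[u])$ is the join of $I(P_{3k+1})$ with the independence complex of the remaining vertices, and since $I(P_{3k+1})$ is contractible by Kozlov's computation, this join is contractible.

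The heart of the argument, and the only step needing genuine care, is the identification of $\{x_2, \dots, x_{3k+2}\}$ as a \emph{full} connected component of $G \setminus N[u]$; this is precisely where both defining properties of an $n$-string are used simultaneously. The fact that $u$ is a neighbour of $v$ forces $v$ to be removed along with $N[u]$, detaching the near end, while the string condition guarantees the far end has no outside edges. I would double-check two minor points feeding this: that $u \neq x_i$ for all $i$, so that the entire subpath survives into $G \setminus N[u]$ (true since $u \notin V(P)$), and that no $x_i$ with $i \ge 2$ accidentally belongs to $N(u)$ (again true, as such $x_i$ have no neighbours outside $V(P)$ and $u \notin V(P)$). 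Once the component is correctly isolated, the contractibility of $I(P_{3k+1})$ together with Proposition \ref{prop join} makes the remainder automatic.
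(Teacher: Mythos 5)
Your proof is correct and follows essentially the same route as the paper: delete the vertices of $U$ one at a time, and for each deletion use the cofiber sequence of Proposition \ref{prop cofib} together with the observation that $P \setminus v$ is a connected component of $G \setminus N[u]$ isomorphic to $P_{3k+1}$, whose contractible independence complex forces $I(G \setminus N[u])$ to be contractible by Proposition \ref{prop join}. Your write-up is in fact slightly more careful than the paper's, since you explicitly verify that $u \notin V(P)$ and that the string property persists in $G \setminus u$, points the paper leaves implicit in its "iterating this" step.
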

\begin{proof}
Set $U = \{ u_1, \cdots, u_n\}$. Then $P \setminus v$ is a connected component of the graph $G \setminus N_G[u_1]$, which is isomorphic to $P_{3k+1}$. Since $I(P_{3k+1})$ is contractible, Proposition \ref{prop join} implies that $I(G \setminus N_G[u_1])$ is contractible. Thus it follows from Proposition \ref{prop cofib} that the inclusion $I(G \setminus u_1) \hookrightarrow I(G)$ is a homotopy equivalence. By the same reason, the inclusion $I(G \setminus \{ u_1, u_2\}) \hookrightarrow I(G \setminus u_1)$ is a homotopy equivalence. Iterating this, we have that the inclusion $I(G \setminus U) = I(G \setminus \{ u_1, \cdots u_n\}) \hookrightarrow I(G)$ is a homotopy equivalence.
\end{proof}

\section{Computations}

In this section, we determine the homotopy types and connectivities of the matching complexes of the graph of $t$ $(2n)$-gons. Recall that $G_{n,t}$ is the line graph of the {graphs} $P_{n,t}$ of $t$ $(2n)$-gons. {An explicit formulation of $G_{n,t}$ was given in Section 1.}




From now on, we fix an integer $n$ greater than $1$ and write $G_t$ instead of $G_{n,t}$. The proof is devided into three cases.

\subsection{Case $n = 3m+1$}

In this subsection, we determine the homotopy types of $I(G_t)$ when $n = 3m+1$. As was mentioned in Section 1, Jeli\'c Milutinovi\'c et al.~\cite{JMMV} determined the homotopy types of $I(G_t)$ in this case. Here we give an alternative proof of their result.

\begin{thm}[Corollary 4.3 of \cite{JMMV}] \label{thm 3m+1}
$I(G_t) \simeq \bigvee_t S^{2tm}$
\end{thm}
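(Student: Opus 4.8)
The plan is to prove the statement by induction on $t$, using the cofiber sequence of Proposition \ref{prop cofib} at the terminal junction vertex $a_t$. The base case $t = 1$ is immediate: $G_1$ is the cycle $C_{2n} = C_{6m+2}$, so the computation of $I(C_{3k+2}) \simeq S^k$ recalled above gives $I(G_1) \simeq S^{2m} = \bigvee_1 S^{2 \cdot 1 \cdot m}$. For the inductive step ($t \ge 2$), note that $a_t$ has exactly the two neighbors $b_{t,n-1}$ and $c_{t,n-1}$, the outer endpoints of the two length-$(n-1)$ side paths of the last $(2n)$-gon, so that $N[a_t] = \{ a_t, b_{t,n-1}, c_{t,n-1}\}$. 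Write $Y = G_t \setminus a_t$ and $X = G_t \setminus N[a_t]$; Proposition \ref{prop cofib} furnishes the cofiber sequence $I(X) \hookrightarrow I(Y) \hookrightarrow I(G_t)$, and the strategy is to compute $I(X)$ and $I(Y)$ separately and then show the left inclusion is null-homotopic, which yields $I(G_t) \simeq I(Y) \vee \Sigma I(X)$.

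The key structural observation, valid precisely because $n = 3m+1$, is that each side path of the last polygon has $n - 1 = 3m$ vertices, so deleting its outer endpoint leaves a path on $n - 2 = 3(m-1)+2$ vertices, exactly a $(3(m-1)+2)$-string, whose independence complex is $S^{m-1}$. Concretely, in $Y = G_t \setminus a_t$ the sets $\{ b_{t,2}, \dots, b_{t,n-1}\}$ and $\{ c_{t,2}, \dots, c_{t,n-1}\}$ are $(3(m-1)+2)$-strings based at $b_{t,2}$ and $c_{t,2}$; applying Proposition \ref{prop string} twice strips off $b_{t,1}$ and $c_{t,1}$ without changing the homotopy type, and the resulting graph is the disjoint union $G_{t-1} \sqcup P_{3m-1} \sqcup P_{3m-1}$. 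By Proposition \ref{prop join}, the join formula $A * S^j \simeq \Sigma^{j+1} A$, and $I(P_{3m-1}) \simeq S^{m-1}$, this gives $I(Y) \simeq \Sigma^{2m} I(G_{t-1})$, hence $I(Y) \simeq \bigvee_{t-1} S^{2tm}$ by the inductive hypothesis.

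An entirely parallel reduction handles $X = G_t \setminus N[a_t]$. Here the truncated side paths $\{ b_{t,1}, \dots, b_{t,n-2}\}$ and $\{ c_{t,1}, \dots, c_{t,n-2}\}$ are $(3(m-1)+2)$-strings based at $b_{t,1}$ and $c_{t,1}$, and Proposition \ref{prop string} strips off the vertices $a_{t-1}, b_{t-1,n-1}, c_{t-1,n-1}$, that is $N_{G_{t-1}}[a_{t-1}]$, leaving $X_{t-1} \sqcup P_{3m-1} \sqcup P_{3m-1}$ where $X_{t-1} = G_{t-1} \setminus N[a_{t-1}]$. Writing $X = X_t$, this yields the recursion $I(X_t) \simeq \Sigma^{2m} I(X_{t-1})$, whose base case $X_1 = C_{2n} \setminus N[a_1]$ is the path $P_{2n-3} = P_{6m-1} = P_{3(2m-1)+2}$, so $I(X_1) \simeq S^{2m-1}$ and therefore $I(X_t) \simeq S^{2tm-1}$.

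It remains to show that the inclusion $I(X) \hookrightarrow I(Y)$ is null-homotopic, and this is the step I expect to be the crux. It will follow from a dimension count rather than any explicit analysis of the map: the source $I(X) \simeq S^{2tm-1}$ is a sphere of dimension $2tm - 1$, while the target $I(Y) \simeq \bigvee_{t-1} S^{2tm}$ is $(2tm-1)$-connected, so $\pi_{2tm-1}(I(Y)) = 0$ and every map $S^{2tm-1} \to I(Y)$ is null-homotopic. Proposition \ref{prop cofib} then gives
$$I(G_t) \simeq I(Y) \vee \Sigma I(X) \simeq \Big( \bigvee_{t-1} S^{2tm}\Big) \vee S^{2tm} = \bigvee_t S^{2tm},$$
completing the induction. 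The main bookkeeping risks are ensuring that the repeated applications of the string lemma remove exactly the claimed vertices and that the leftover graphs are the asserted disjoint unions; the homotopy-theoretic content is concentrated entirely in the clean null-homotopy argument above.
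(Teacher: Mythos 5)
Your proposal is correct and follows essentially the same route as the paper's own proof: the same cofiber sequence at $a_t$ from Proposition \ref{prop cofib}, the same applications of Proposition \ref{prop string} reducing $G_t \setminus a_t$ to $G_{t-1} \sqcup P_{3m-1} \sqcup P_{3m-1}$ and $G_t \setminus N[a_t]$ to $X_{t-1} \sqcup P_{3m-1} \sqcup P_{3m-1}$, and the same connectivity-based null-homotopy argument. The only cosmetic difference is that the paper isolates the computation $I(X_t) \simeq S^{2tm-1}$ as a separate induction (Lemma \ref{lem 3m+1}) rather than inlining it into the main induction on $t$.
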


Throughout this subsection, put $X_t = G_t \setminus N[a_t]$ for $t \ge 1$ (In the next two subsections, we use $X_t$ to indicate other graphs).

\begin{lem} \label{lem 3m+1}
$I(X_t) \simeq S^{2tm-1}$
\end{lem}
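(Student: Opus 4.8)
The plan is to prove the lemma by induction on $t$, establishing the recursion $I(X_t) \simeq \Sigma^{2m} I(X_{t-1})$ and computing the base case directly. The geometric idea is that deleting the closed neighborhood $N[a_t] = \{a_t, b_{t,n-1}, c_{t,n-1}\}$ severs the last $(2n)$-gon, leaving two dangling paths whose independence complexes are spheres, and that peeling these off together with the previous branch vertex $a_{t-1}$ exposes a copy of $X_{t-1}$.

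First I would analyze $X_t = G_t \setminus N[a_t]$. After the deletion, the surviving $b$- and $c$-vertices of the final block, namely $b_{t,1}, \dots, b_{t,n-2}$ and $c_{t,1}, \dots, c_{t,n-2}$, each form an induced path isomorphic to $P_{n-2} = P_{3(m-1)+2}$, attached to the rest of $X_t$ only through its initial vertex $b_{t,1}$ (resp.\ $c_{t,1}$). Thus each is a $(3(m-1)+2)$-string based at $b_{t,1}$ (resp.\ $c_{t,1}$). Applying Proposition \ref{prop string} to the $b$-string, with $U = N(b_{t,1}) \setminus \{b_{t,2}\} = \{b_{t-1,n-1}, a_{t-1}\}$, gives $I(X_t) \simeq I(X_t \setminus \{b_{t-1,n-1}, a_{t-1}\})$; applying it again to the $c$-string (whose base now has $N(c_{t,1}) = \{c_{t,2}, c_{t-1,n-1}\}$) removes $c_{t-1,n-1}$ as well. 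Writing $Y = X_t \setminus \{a_{t-1}, b_{t-1,n-1}, c_{t-1,n-1}\}$, I would then observe that in $Y$ the two block-$t$ paths are isolated components, while the remaining induced subgraph is precisely $G_{t-1} \setminus N_{G_{t-1}}[a_{t-1}] = X_{t-1}$.

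The computation then follows from Proposition \ref{prop join}: since $Y = P_{3(m-1)+2} \sqcup P_{3(m-1)+2} \sqcup X_{t-1}$, we get $I(X_t) \simeq I(Y) = I(P_{3(m-1)+2}) * I(P_{3(m-1)+2}) * I(X_{t-1})$. Kozlov's computation $I(P_{3k+2}) \simeq S^k$ gives $I(P_{3(m-1)+2}) \simeq S^{m-1}$, and using $S^{m-1} * S^{m-1} \simeq S^{2m-1}$ together with $S^{2m-1} * Z \simeq \Sigma^{2m} Z$, this yields $I(X_t) \simeq \Sigma^{2m} I(X_{t-1})$. By the inductive hypothesis $I(X_{t-1}) \simeq S^{2(t-1)m-1}$, so $I(X_t) \simeq S^{2tm-1}$. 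For the base case $t = 1$, the graph $G_1$ is the cycle $C_{2n}$ and $N[a_1]$ consists of three consecutive vertices on it, so $X_1 \cong P_{2n-3} = P_{3(2m-1)+2}$, whence $I(X_1) \simeq S^{2m-1}$, as required.

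The main obstacle I anticipate is bookkeeping rather than conceptual: one must verify carefully that the dangling paths genuinely satisfy the string condition of Proposition \ref{prop string}, that every non-base vertex has no neighbor outside the path, which hinges on $b_{t,n-1}$ and $c_{t,n-1}$ having been deleted, and that after the two string reductions the leftover induced subgraph is exactly $X_{t-1}$ with no stray edges surviving between the isolated block-$t$ paths and the rest. The order of the two string deletions matters, since removing $a_{t-1}$ in the first step changes the open neighborhood of $c_{t,1}$ used in the second; keeping track of this is the only delicate point.
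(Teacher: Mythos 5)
Your proof is correct and follows essentially the same route as the paper: induction on $t$, with the two surviving block-$t$ paths viewed as $(3(m-1)+2)$-strings so that Proposition \ref{prop string} deletes $a_{t-1}$, $b_{t-1,n-1}$, $c_{t-1,n-1}$, yielding $I(X_t) \simeq I(P_{3m-1}) * I(P_{3m-1}) * I(X_{t-1}) \simeq \Sigma^{2m} I(X_{t-1})$ and the base case $X_1 \cong P_{6m-1}$. Your write-up is in fact slightly more careful than the paper's (you track the updated neighborhood of $c_{t,1}$ after the first string deletion, and your vertex indices $b_{t-1,n-1}$, $c_{t-1,n-1}$ correct what appears to be a typo, $b_{t-1,3m-1}$, $c_{t-1,3m-1}$, in the paper's proof).
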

\begin{proof}
We prove this by induction on $t$. Since $I(X_1) = I(P_{6m-1}) \simeq S^{2m-1}$, the case $t = 1$ holds. Suppose $t \ge 2$. Then $b_{t, 3m-1}, \cdots, b_{t,1}$ form an $(3m-1)$-string based at $b_{t,1}$, and $c_{t, 3m-1}, \cdots, c_{t,1}$ form an $(3m-1)$-string based at $c_{t,1}$. Using Proposition \ref{prop string}, we can delete the three vertices $b_{t-1, 3m-1}, c_{t-1,3m-1}, a_{t-1}$ from $X_t$. Figure 3 depicts the case $m = 1$. Hence we have
$$I(X_t) \simeq I(P_{3m-1}) * I(P_{3m-1}) * I(X_{t-1}) \simeq S^{m-1} * S^{m-1} * I(X_{t-1}) \simeq \Sigma^{2m} I(X_{t-1}).$$
This completes the proof.
\end{proof}

\begin{figure}[t]
\begin{center}
\begin{picture}(420,120)(0,0)

\multiput(20,10)(20,0){7}{\circle*{3}}
\multiput(30,30)(60,0){2}{\circle*{3}}
\multiput(20,50)(20,0){7}{\circle*{3}}

\multiput(20,10)(60,0){2}{\line(1,2){20}}
\multiput(20,50)(60,0){2}{\line(1,-2){20}}

\put(150,30){\circle{3}}

\multiput(20,10)(0,40){2}{\line(1,0){120}}
\multiput(20,10)(0,40){2}{\line(-1,0){10}}

\put(-25,28){\small $G_t \setminus a_t$}
\put(350,28){\small $G_{t-1} \sqcup P_{3m-1} \sqcup P_{3m-1}$}
\put(350,88){\small $X_{t-1} \sqcup P_{3m-1} \sqcup P_{3m-1}$}

\put(-15,88){\small $X_t$}

\put(173,28){\small $\simeq$}


\multiput(210,10)(20,0){4}{\circle*{3}}
\multiput(220,30)(60,0){2}{\circle*{3}}
\multiput(210,50)(20,0){4}{\circle*{3}}

\multiput(290,10)(0,40){2}{\circle{3}}
\multiput(310,10)(0,40){2}{\circle*{3}}
\multiput(330,10)(0,40){2}{\circle*{3}}
\multiput(310,10)(0,40){2}{\line(1,0){20}}

\put(280,30){\line(-1,2){10}}
\put(280,30){\line(-1,-2){10}}

\put(210,10){\line(1,2){20}}
\put(210,50){\line(1,-2){20}}

\put(340,30){\circle{3}}

\multiput(210,10)(0,40){2}{\line(1,0){60}}
\multiput(210,10)(0,40){2}{\line(-1,0){10}}


\multiput(20,70)(20,0){6}{\circle*{3}}
\multiput(30,90)(60,0){2}{\circle*{3}}
\multiput(20,110)(20,0){6}{\circle*{3}}

\multiput(20,70)(60,0){2}{\line(1,2){20}}
\multiput(20,110)(60,0){2}{\line(1,-2){20}}

\put(140,70){\circle{3}}
\put(140,110){\circle{3}}
\put(150,90){\circle{3}}

\multiput(20,70)(0,40){2}{\line(1,0){100}}
\multiput(20,70)(0,40){2}{\line(-1,0){10}}

\put(173,88){\small $\simeq$}


\multiput(210,70)(20,0){3}{\circle*{3}}
\put(220,90){\circle*{3}}
\multiput(280,90)(60,0){2}{\circle{3}}
\multiput(210,110)(20,0){3}{\circle*{3}}

\multiput(270,70)(0,40){2}{\circle{3}}
\multiput(290,70)(0,40){2}{\circle*{3}}
\multiput(310,70)(0,40){2}{\circle*{3}}

\put(210,70){\line(1,2){20}}
\put(210,110){\line(1,-2){20}}

\put(330,70){\circle{3}}
\put(330,110){\circle{3}}
\put(340,90){\circle{3}}

\multiput(210,70)(0,40){2}{\line(1,0){40}}
\multiput(210,70)(0,40){2}{\line(-1,0){10}}
\multiput(290,70)(0,40){2}{\line(1,0){20}}

\end{picture}

{\bf Figure 3.}
\end{center}
\end{figure}

\noindent
{\it Proof of Theorem \ref{thm 3m+1}.} Since $I(G_1) = I(C_{6m+2}) \simeq S^{2m}$, the case $t = 1$ holds. In $G_t \setminus a_t$, the vertices $b_{t, 3m}, \cdots, b_{t,2}$ form a $(3m-1)$-string based at $b_{t,2}$, and the vertices $c_{t, 3m}, \cdots, c_{t,2}$ form a $(3m - 1)$-string based at $c_{t,2}$. Thus using Proposition \ref{prop string}, we can delete the vertices $b_{t,1}$ and $c_{t,1}$ from $G_t \setminus a_t$ (see Figure 3). Hence we have
$$I(G_t \setminus a_t) \simeq I(P_{3m-1}) * I(P_{3m-1}) * I(G_{t-1}) \simeq \Sigma^{2m} I(G_{t-1}) \simeq \bigvee_{t-1} S^{2tm}.$$
Since $I(G_t \setminus N[a_t]) = I(X_t) \simeq S^{2tm-1}$, the inclusion $I(G_t \setminus N[a_t]) \hookrightarrow I(G_t \setminus a_t)$ is null-homotopic. Thus Proposition \ref{prop cofib} implies
$$I(G_t) \simeq I(G_t \setminus a_t) \vee \Sigma I(X_t) \simeq \bigvee_{t} S^{2tm}.$$
\qed

\subsection{Case $n = 3m+2$}

The purpose in this section is to determine the homotopy type and connectivity of $I(G_t)$ when $n = 3m + 2$. Here we assume that $m \ge 0$. Note that the case $m = 0$, the graph of $t$ $(2n)$-gons is a $(t \times 2)$-grid graph, and hence the homotopy types of their matching complexes are determined in \cite{Matsushita}. We first prove the following recursive formula:

\begin{thm} \label{thm 4.1.1}
For $t \ge 3$, there is a homotopy equivalence
$$I(G_t) \simeq \Sigma^{6m+2} I(G_{t-3}) \vee \Sigma^{6m+2} I(G_{t-3}) \vee \Sigma^{8m+3} I(G_{t-4}).$$
\end{thm}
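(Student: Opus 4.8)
The plan is to apply the cofiber sequence of Proposition \ref{prop cofib} repeatedly, using the vertex $a_t$ as the primary splitting point, and then to simplify the resulting graphs using the string lemma (Proposition \ref{prop string}). The guiding principle is that $n = 3m+2$, so each ``arm'' of a hexagon-like cell consists of $n-1 = 3m+1$ vertices, which is exactly the length that makes a path contractible ($I(P_{3m+1}) \simeq \pt$); this is what should make the strings collapse cleanly. First I would split $I(G_t)$ along $a_t$: by Proposition \ref{prop cofib} we get a cofiber sequence $I(G_t \setminus N[a_t]) \hookrightarrow I(G_t \setminus a_t) \hookrightarrow I(G_t)$. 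The hope, as in the proof of Theorem \ref{thm 3m+1}, is that the inclusion $I(G_t \setminus N[a_t]) \hookrightarrow I(G_t \setminus a_t)$ is null-homotopic, so that $I(G_t) \simeq I(G_t \setminus a_t) \vee \Sigma I(X_t)$ where I set $X_t = G_t \setminus N[a_t]$.

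The next step is to understand each of the two pieces $I(G_t \setminus a_t)$ and $I(X_t)$ separately by further decomposition. For $I(G_t \setminus a_t)$, in the last cell the vertices $b_{t,\bullet}$ and $c_{t,\bullet}$ now form strings based at the vertices adjacent to $a_{t-1}$; since each arm has $3m+1$ internal vertices, after deleting $a_t$ the two arms should reduce (via Proposition \ref{prop string}, possibly after first handling the endpoints $b_{t,1}, c_{t,1}$ as in the $n=3m+1$ proof) to give suspensions of $I(G_{t-1})$ or of $X_{t-1}$-type graphs. I expect that iterating the splitting one or two more steps — peeling off $a_{t-1}$, then $a_{t-2}$ — is what produces the three summands $\Sigma^{6m+2}I(G_{t-3})$, $\Sigma^{6m+2}I(G_{t-3})$, and $\Sigma^{8m+3}I(G_{t-4})$. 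The two equal summands $\Sigma^{6m+2}I(G_{t-3})$ strongly suggest a symmetric branching: at some intermediate stage a cofiber sequence genuinely splits into a wedge, with the ``$K \setminus v$'' part and the ``$\Sigma\,\mathrm{link}$'' part each contributing one copy of $\Sigma^{6m+2}I(G_{t-3})$, while the residual $\Sigma^{8m+3}I(G_{t-4})$ comes from the deepest link, where an extra cell (and hence an extra $3m+1$-worth of suspension, accounting for the jump from $6m+2$ to $8m+3$) has been consumed. I would track the suspension degrees carefully: each collapsed $(3m+2)$-string contributes a factor $I(P_{3m+1}) \simeq \pt$ that deletes vertices, while each genuine arm of length $3m$ or $3m-1$ contributes a sphere $S^{m-1}$ or $S^m$ via Kozlov's computation, and joins of these spheres raise the suspension degree.

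The main obstacle I anticipate is bookkeeping the exact suspension degrees and verifying that the relevant inclusions are null-homotopic at each splitting. The two exponents $6m+2$ and $8m+3$ differ by $2m+1$, which is the reduced dimension jump corresponding to one more hexagonal cell of arm-length $3m+1$ being resolved into spheres; getting these constants right requires carefully counting how many $S^{m-1}$ and $S^m$ factors appear in each branch and how the $a_i$-deletions interleave with the string collapses. The null-homotopy claims are the genuinely delicate points: Proposition \ref{prop cofib} only gives the wedge splitting when $I(G \setminus N[v]) \hookrightarrow I(G \setminus v)$ is null-homotopic, so at each stage I must exhibit a contracting reason — typically that after deleting the neighborhood of $a_i$, some arm becomes a path $P_{3m+1}$ (or a disjoint component with contractible independence complex), forcing the link into a contractible or otherwise trivially-included subcomplex. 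I would organize the argument by first establishing an auxiliary lemma computing $I(X_t)$ (the analogue of Lemma \ref{lem 3m+1}) in the $n=3m+2$ case, and possibly a second auxiliary recursion for $I(G_t \setminus a_t)$, so that the main theorem follows by assembling these pieces; confirming that these auxiliary recursions close up consistently with the base cases $t = 1,2,3$ of part (2) of Theorem \ref{main thm 1} is where I would spend the most care.
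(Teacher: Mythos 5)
Your very first step is false, and this sinks the whole plan. You propose to split at $a_t$ and hope, ``as in the proof of Theorem \ref{thm 3m+1},'' that the inclusion $I(G_t \setminus N[a_t]) \hookrightarrow I(G_t \setminus a_t)$ is null-homotopic, so that $I(G_t) \simeq I(G_t \setminus a_t) \vee \Sigma I(G_t \setminus N[a_t])$. For $n = 3m+2$ this fails already at $t = 3$. Indeed, in $G_3 \setminus N[a_3]$ the two dangling arms reduce (via Proposition \ref{prop string}, deleting $b_{3,1}$ and $c_{3,1}$) to $P_{3m-1} \sqcup P_{3m-1} \sqcup G_2$, so $I(G_3 \setminus N[a_3]) \simeq \Sigma^{2m} I(G_2) \simeq S^{6m+1} \vee S^{6m+1}$. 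If your splitting held, then $\Sigma I(G_3 \setminus N[a_3]) \simeq S^{6m+2} \vee S^{6m+2}$ would be a wedge summand of $I(G_3)$, forcing $\widetilde{H}_{6m+2}(I(G_3))$ to have rank at least $2$; but $I(G_3) \simeq S^{6m+2}$ (Proposition \ref{prop 4.1.2}), so this group has rank $1$. Hence the link inclusion at $a_t$ is essential, and no amount of care can establish the null-homotopy you need --- the same contradiction occurs at $t=4$ and beyond. Structurally, the reason the $n=3m+1$ argument does not transfer is exactly the arm length you flagged as favorable: after deleting $a_t$ the arms have $3m+1$ vertices, so the longest usable $(3k+2)$-strings are $b_{t,3},\dots,b_{t,3m+1}$ and the collapse leaves the pendant vertices $b_{t,1}, c_{t,1}$ attached to $a_{t-1}$; thus $I(G_t \setminus a_t)$ is a suspension of an $H$-type complex (in the sense of the $H_{n,t}$ graphs of Section 1), not of $I(G_{t-1})$, and its spheres land in the same dimensions as those of $\Sigma I(G_t\setminus N[a_t])$ rather than safely above them.

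The paper's proof avoids this trap by choosing a different deletion order, and that choice is the key idea missing from your outline. It first deletes $a_{t-1}$: here $G_t \setminus N[a_{t-1}]$ has a connected component isomorphic to $P_{6m+1}$, whose independence complex is contractible, so Proposition \ref{prop cofib} gives an honest homotopy equivalence $I(G_t) \simeq I(G_t \setminus a_{t-1})$ with no null-homotopy to verify (Lemma \ref{lem 3.2.1}). It then splits at $a_{t-2}$ (Lemma \ref{lem 3.2.2}), which contributes the first summand $\Sigma^{6m+2} I(G_{t-3})$, and only then splits at $a_t$ inside the reduced graph $Y_t = G_t \setminus \{a_{t-1}, a_{t-2}\}$ (Lemma \ref{lem 3.2.3}), where the deletion part gives the second $\Sigma^{6m+2} I(G_{t-3})$ and the suspended link gives $\Sigma^{8m+3} I(G_{t-4})$. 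In particular your guess that the two equal summands arise from one ``symmetric'' cofiber splitting is also not what happens: one copy is a suspended link (at $a_{t-2}$) and the other is a deletion complex (of $a_t$), produced at different stages. Finally, note that the null-homotopies that do occur in Lemmas \ref{lem 3.2.2} and \ref{lem 3.2.3} are not proved by dimension counts, but by exhibiting a subcomplex homotopy equivalent to the link that lies in the star of a vertex of the deletion --- the mechanism your proposal would also have needed but never specifies.
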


For $t \ge 1$, we put $X_t = G_t \setminus a_{t-1}$ (see Figure 4).

\begin{lem} \label{lem 3.2.1}
For $t \ge 1$, there is a homotopy equivalence
$$I(G_t) \simeq I(X_t).$$
\end{lem}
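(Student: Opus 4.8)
The plan is to apply the Adamaszek cofiber sequence of Proposition \ref{prop cofib} with $v = a_{t-1}$. Since $X_t = G_t \setminus a_{t-1}$, the lemma reduces to showing that the link $I(G_t \setminus N[a_{t-1}])$ is contractible: once we know this, any map out of it is null-homotopic, so Proposition \ref{prop cofib} gives $I(G_t) \simeq I(X_t) \vee \Sigma I(G_t \setminus N[a_{t-1}]) \simeq I(X_t) \vee \Sigma(\mathrm{pt}) \simeq I(X_t)$.

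To establish the contractibility, I would first record the closed neighborhood $N[a_{t-1}]$: it consists of $a_{t-1}$, the two vertices $b_{t,1}, c_{t,1}$ of the $t$-th $(2n)$-gon adjacent to $a_{t-1}$, and the two vertices $b_{t-1,n-1}, c_{t-1,n-1}$ of the $(t-1)$-st polygon adjacent to $a_{t-1}$. The point is that deleting these five vertices detaches the remainder of the last polygon from the rest of the graph. Concretely, the surviving vertices $b_{t,2}, \ldots, b_{t,n-1}, a_t, c_{t,n-1}, \ldots, c_{t,2}$ form a single connected component, and I would verify it has no edge to any other surviving vertex: the only attachments of the last polygon to the rest of $G_t$ ran through $a_{t-1}$, $b_{t,1}$ and $c_{t,1}$, all of which have been removed, and $a_t$ has no neighbors beyond $b_{t,n-1}$ and $c_{t,n-1}$ because there is no $(t+1)$-st polygon.

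The decisive observation is that this component is a \emph{path}. Reading it as $b_{t,2} - \cdots - b_{t,n-1} - a_t - c_{t,n-1} - \cdots - c_{t,2}$, it is isomorphic to $P_{2n-3}$, and since $n = 3m+2$ we have $2n-3 = 6m+1 = 3(2m)+1$. By Kozlov's computation $I(P_{3k+1}) \simeq \pt$, so this component has contractible independence complex. As noted in the remark following Proposition \ref{prop join}, a disjoint union with a component of contractible independence complex has contractible independence complex; hence $I(G_t \setminus N[a_{t-1}])$ is contractible, which is exactly what is required.

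The argument is uniform in $t \ge 1$: for $t = 1$ the graph $G_1 \setminus N[a_0]$ is itself the single path $P_{2n-3}$, while for $t \ge 2$ that path occurs as one component alongside the (irrelevant) leftover of the first $t-1$ polygons. I expect the only genuine care to lie in checking that the remnant of the last polygon is truly an isolated component with exactly $2n-3$ vertices; the congruence $2n-3 \equiv 1 \pmod 3$, forcing $I(P_{2n-3}) \simeq \pt$, is the crucial arithmetic fact behind the whole lemma.
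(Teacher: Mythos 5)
Your proposal is correct and takes essentially the same approach as the paper's own proof: both apply Proposition \ref{prop cofib} at the vertex $a_{t-1}$ and observe that $G_t \setminus N[a_{t-1}]$ has the path $b_{t,2}, \cdots, b_{t,n-1}, a_t, c_{t,n-1}, \cdots, c_{t,2} \cong P_{6m+1}$ as a connected component, whose independence complex is contractible by Kozlov's computation, so Proposition \ref{prop join} gives contractibility of the link and hence $I(G_t) \simeq I(X_t)$. The only difference is cosmetic: you write the component as $P_{2n-3}$ and check $2n-3 \equiv 1 \pmod 3$, where the paper states $P_{6m+1}$ directly.
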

\begin{proof}
By Proposition \ref{prop cofib}, it suffices to show that $I(G_t \setminus N[a_{t-1}])$ is contractible. {Note that} $G_t \setminus N[a_{t-1}]$ has a connected component consisting of the vertices
$$b_{t,2}, \cdots, b_{t, 3m+1}, a_t, c_{t,3m+1}, \cdots, c_{t,2},$$
which is isomorphic to $P_{6m+1}$. Since $I(P_{6m+1})$ is contractible, Proposition \ref{prop join} implies that $I(G_t \setminus N[a_{t-1}])$ is contractible.
\end{proof}

Next we put $Y_t =  X_t \setminus a_{t-2}$ for $t \ge 2$ (see Figure 4).

\begin{lem} \label{lem 3.2.2}
For $t \ge 3$, there is a homotopy equivalence
$$I(X_t) \simeq I(Y_t) \vee \Sigma^{6m+2} I(G_{t-3}).$$
\end{lem}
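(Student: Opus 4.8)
The plan is to apply the cofiber sequence of Proposition \ref{prop cofib} to the graph $X_t$ at the vertex $v = a_{t-2}$. Since $X_t \setminus a_{t-2} = Y_t$ by definition, this produces the cofiber sequence
$$I(X_t \setminus N[a_{t-2}]) \hookrightarrow I(Y_t) \hookrightarrow I(X_t),$$
so it suffices to establish two things: that $\Sigma I(X_t \setminus N[a_{t-2}]) \simeq \Sigma^{6m+2} I(G_{t-3})$, and that the left-hand inclusion is null-homotopic, so that the splitting clause of Proposition \ref{prop cofib} applies.

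For the homotopy type of the link, I would first record that $N(a_{t-2}) = \{ b_{t-2,n-1}, c_{t-2,n-1}, b_{t-1,1}, c_{t-1,1}\}$, so that $X_t \setminus N[a_{t-2}]$ arises from $G_t$ by deleting $a_{t-1}$, $a_{t-2}$ and these four vertices. The resulting graph splits into two components. The terminal component containing $a_t$ is the induced path running $b_{t-1,2}, \dots, b_{t-1,n-1}, b_{t,1}, \dots, b_{t,n-1}, a_t, c_{t,n-1}, \dots, c_{t-1,2}$; counting vertices gives $P_{12m+3} = P_{3(4m+1)}$, whose independence complex is $S^{4m}$ by Kozlov's computation. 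The other component $W$ is a copy of $G_{t-3}$ carrying the two pendant paths $b_{t-2,1}, \dots, b_{t-2,n-2}$ and $c_{t-2,1}, \dots, c_{t-2,n-2}$ attached at $a_{t-3}$. Here the sub-paths $b_{t-2,2}, \dots, b_{t-2,n-2}$ and $c_{t-2,2}, \dots, c_{t-2,n-2}$ are $(3(m-1)+2)$-strings based at $b_{t-2,2}$ and $c_{t-2,2}$, so applying Proposition \ref{prop string} twice deletes $b_{t-2,1}$ and $c_{t-2,1}$ up to homotopy and leaves $G_{t-3} \sqcup P_{3m-1} \sqcup P_{3m-1}$. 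Proposition \ref{prop join} then gives $I(W) \simeq S^{m-1} * S^{m-1} * I(G_{t-3}) \simeq \Sigma^{2m} I(G_{t-3})$, and a last application of Proposition \ref{prop join} to the two components yields
$$I(X_t \setminus N[a_{t-2}]) \simeq S^{4m} * \Sigma^{2m} I(G_{t-3}) \simeq \Sigma^{6m+1} I(G_{t-3}),$$
whence $\Sigma I(X_t \setminus N[a_{t-2}]) \simeq \Sigma^{6m+2} I(G_{t-3})$ as required.

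The step I expect to be the main obstacle is the null-homotopy of $I(X_t \setminus N[a_{t-2}]) \hookrightarrow I(Y_t)$: no single vertex of $N(a_{t-2})$ can serve as a cone point, since each of $b_{t-2,n-1}, c_{t-2,n-1}, b_{t-1,1}, c_{t-1,1}$ is adjacent to some vertex of $X_t \setminus N[a_{t-2}]$. I would circumvent this by adjoining the single vertex $b_{t-1,1}$, which lies in $Y_t$. Within the vertex set $V(X_t \setminus N[a_{t-2}]) \cup \{ b_{t-1,1}\}$ the vertex $b_{t-1,1}$ is adjacent only to $b_{t-1,2}$, because its remaining neighbors $a_{t-1}$, $a_{t-2}$, $b_{t-2,n-1}$ all lie outside this set; thus $b_{t-1,1}$ is non-adjacent to all of $W$ and merely lengthens the terminal path to $P_{12m+4} = P_{3(4m+1)+1}$, which is contractible. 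Consequently the full subcomplex of $I(Y_t)$ on these vertices is $I(P_{12m+4}) * I(W)$, a join with a contractible factor, hence contractible, and the inclusion $I(X_t \setminus N[a_{t-2}]) \hookrightarrow I(Y_t)$ factors through it and is therefore null-homotopic. Proposition \ref{prop cofib} then gives $I(X_t) \simeq I(Y_t) \vee \Sigma^{6m+2} I(G_{t-3})$, which is the assertion. The hypothesis $t \ge 3$ is exactly what guarantees that $a_{t-3}$ and the three distinct bubbles $t-2, t-1, t$ exist, and the degenerate case $m = 0$ (where $n = 2$) is handled by the same argument with the empty pendant paths omitted.
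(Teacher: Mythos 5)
Your proposal is correct, and its skeleton coincides with the paper's: both apply Proposition \ref{prop cofib} at the vertex $a_{t-2}$ of $X_t$, and your computation of the link, $I(X_t \setminus N[a_{t-2}]) \simeq \Sigma^{6m+1} I(G_{t-3})$ --- splitting off the path component $P \cong P_{12m+3}$ with $I(P) \simeq S^{4m}$, then reducing the other component (your $W$, the paper's $H$) to $G_{t-3} \sqcup P_{3m-1} \sqcup P_{3m-1}$ by deleting $b_{t-2,1}, c_{t-2,1}$ via Proposition \ref{prop string} --- is exactly the paper's argument. Where you genuinely diverge is the null-homotopy of $I(X_t \setminus N[a_{t-2}]) \hookrightarrow I(Y_t)$, which you rightly single out as the crux. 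The paper shrinks: it deletes the endpoint $c_{t-1,2}$ from $P$, invokes Corollary \ref{cor P} to see that $I(P \setminus c_{t-1,2}) \hookrightarrow I(P)$ is a homotopy equivalence, observes that $(P \setminus c_{t-1,2}) \sqcup H$ has no vertex adjacent to $c_{t-1,1}$, so its independence complex lies in the star of $c_{t-1,1}$ inside $I(Y_t)$, and then deduces the null-homotopy by precomposing with the homotopy equivalence. You instead enlarge: adjoining $b_{t-1,1}$ lengthens $P$ to $P_{12m+4} = P_{3(4m+1)+1}$, whose independence complex is contractible by Kozlov's computation, so the induced subcomplex $I(P_{12m+4} \sqcup W)$ of $I(Y_t)$ is contractible and the inclusion factors through it. The two tricks are closely related --- both factor the inclusion through a contractible induced subcomplex of $I(Y_t)$ obtained by changing the path component by one vertex --- but yours is slightly more economical: a single direct factorization, needing only $I(P_{3k+1}) \simeq \mathrm{pt}$, with no appeal to Corollary \ref{cor P} and no ``precompose with an equivalence'' step. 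One harmless slip: for $m \ge 1$ the vertex $a_{t-1}$ is not a neighbor of $b_{t-1,1}$ (its neighbors are $a_{t-2}$, $b_{t-2,n-1}$, $b_{t-1,2}$); since you only claim it lies outside the relevant vertex set, and $a_{t-1}$ is not even a vertex of $Y_t$, the argument is unaffected.
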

\begin{proof}
By Proposition \ref{prop cofib}, it suffices to show the following assertions:
\begin{itemize}
\item[(1)] The inclusion $I(X_t \setminus N[a_{t-2}]) \hookrightarrow I(X_t \setminus a_{t-2}) = I(Y_t)$ is null-homotopic.

\item[(2)] $I(X_t \setminus N[a_{t-2}]) \simeq \Sigma^{6m+1} I(G_{t-3})$.
\end{itemize}

We first show (1). The graph $X_t \setminus N[a_{t-2}]$ has a connected component $P$ consisting of
$$b_{t-1,2}, \cdots, b_{t-1, 3m+1}, b_{t,1}, \cdots, b_{t,3m+1}, a_t, c_{t,3m+1}, \cdots, c_{t,1}, c_{t-1, 3m+1}, \cdots, c_{t-1,2},$$
which is isomorphic to $P_{12m+3}$. We define the subgraph $H$ by $X_t \setminus N[a_{t-2}] = P \sqcup H$. 

Since the inclusion $I(P_{12m + 2}) \hookrightarrow I(P_{12m+3})$ is a homotopy equivalence (Corollary \ref{cor P}), we have that $I(P \setminus c_{t-1, 2}) \hookrightarrow I(P)$ is a homotopy equivalence. By Proposition \ref{prop join}, we have that the inclusion
$$I( (P \setminus c_{t-1,2}) \sqcup H) \hookrightarrow I(P \sqcup H) = I(X_t \setminus N[a_{t-2}])$$
is a homotopy equivalence. Since the graph $(P \setminus c_{t-1}) \sqcup H$ has no vertices adjacent to $c_{t-1,1} \in V(X_t \setminus a_{t-2})$, we have that $I((P \setminus c_{t-1,2}) \sqcup H)$ is contained in the star at $c_{t-1,1}$ in $I(X_t \setminus a_{t-2})$. This means that the composite of the inclusions
$$I(P \setminus c_{t-1,2}) \sqcup H) \xrightarrow{\simeq} I(P \sqcup H) \to I(X_t \setminus N[a_{t-2}]) \to I(X_t \setminus a_{t-2})$$
is null-homotopic. This completes the proof of (1).

Next we show (2). Since $I(P) = I(P_{12m+3}) \simeq S^{4m}$, we have
$$I(X_t \setminus N[a_{t-1}]) = I(P) * I(H) \simeq \Sigma^{4m+1} I(H).$$
In $H$, the vertices $b_{t-2, 3m}, \cdots, b_{t-2,2}$ form a $(3m-1)$-string based at $b_{t-2,2}$, and the vertices $c_{t-2, 3m}, \cdots, c_{t-2,2}$ form a $(3m-1)$-string based at $c_{t-2,2}$. By Proposition \ref{prop string}, we can delete the vertices $b_{t-2,1}, c_{t-2,1}$ from $X_t \setminus N[a_{t-2}]$, and hence we have
$$I(H) \simeq I(P_{3m-1} \sqcup P_{3m-1} \sqcup G_{t-3}) = S^{m-1} * S^{m-1} * I(G_{t-3}) = \Sigma^{2m} I(G_{t-3}).$$
This implies $I(X_t \setminus N[a_{t-2}]) \simeq \Sigma^{4m+1} I(H) = \Sigma^{6m+1} I(G_{t-3})$. This completes the proof.
\end{proof}

\begin{figure}[t]
\begin{center}
\begin{picture}(360,100)(0,-20)

\multiput(10,10)(40,0){4}{\circle*{3}}
\multiput(10,70)(40,0){4}{\circle*{3}}
\multiput(30,40)(40,0){2}{\circle*{3}}

\put(110,40){\circle{3}}
\put(150,40){\circle*{3}}

\multiput(10,10)(40,0){2}{\line(2,3){40}}
\multiput(10,70)(40,0){2}{\line(2,-3){40}}
\multiput(10,10)(0,60){2}{\line(1,0){120}}
\put(150,40){\line(-2,3){20}}
\put(150,40){\line(-2,-3){20}}

\put(10,10){\line(-2,3){10}}
\put(10,10){\line(-1,0){10}}

\put(10,70){\line(-2,-3){10}}
\put(10,70){\line(-1,0){10}}

\put(60,-8){$X_t$}


\multiput(210,10)(40,0){4}{\circle*{3}}
\multiput(210,70)(40,0){4}{\circle*{3}}
\put(230,40){\circle*{3}}
\put(270,40){\circle{3}}
\put(310,40){\circle{3}}
\put(350,40){\circle*{3}}

\put(210,10){\line(2,3){40}}
\put(210,70){\line(2,-3){40}}
\multiput(210,10)(0,60){2}{\line(1,0){120}}
\put(350,40){\line(-2,3){20}}
\put(350,40){\line(-2,-3){20}}

\put(210,10){\line(-2,3){10}}
\put(210,10){\line(-1,0){10}}

\put(210,70){\line(-2,-3){10}}
\put(210,70){\line(-1,0){10}}

\put(260,-8){$Y_t$}

\end{picture}

{\bf Figure 4.}
\end{center}
\end{figure}

\begin{lem} \label{lem 3.2.3}
For $t \ge 4$, there is a homotopy equivalence
$$I(Y_t) \simeq \Sigma^{6m+2} I(G_{t-3}) \vee \Sigma^{8m+3}I(G_{t-4}).$$
\end{lem}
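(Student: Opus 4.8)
The plan is to apply Proposition~\ref{prop cofib} to the vertex $v=a_t$, the common far endpoint of the two long chains created by deleting $a_{t-1}$ and $a_{t-2}$. This presents $I(Y_t)$ as $I(Y_t\setminus a_t)\vee\Sigma I(Y_t\setminus N[a_t])$, \emph{provided} the connecting inclusion $I(Y_t\setminus N[a_t])\hookrightarrow I(Y_t\setminus a_t)$ is null-homotopic. I expect the two factors to be $\Sigma^{6m+2}I(G_{t-3})$ and $\Sigma^{8m+2}I(G_{t-4})$, so that the cofiber splitting yields $\Sigma^{6m+2}I(G_{t-3})\vee\Sigma^{8m+3}I(G_{t-4})$ as claimed. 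The two homotopy computations are routine peelings by the string lemma; the whole difficulty is concentrated in the null-homotopy.

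First I would compute $I(Y_t\setminus a_t)$. Deleting $a_t$ frees the far ends of both chains, so the paths $b_{t-2,2},\dots,b_{t,n-1}$ and $c_{t-2,2},\dots,c_{t,n-1}$, each on $9m+2=3\cdot 3m+2$ vertices, are $(3\cdot 3m+2)$-strings based at $b_{t-2,2}$ and $c_{t-2,2}$. By Proposition~\ref{prop string} I may delete $b_{t-2,1}$ and $c_{t-2,1}$, which detaches the two chains as free $P_{9m+2}$-components and leaves exactly $G_{t-3}$. Since $I(P_{9m+2})\simeq S^{3m}$, Proposition~\ref{prop join} gives $I(Y_t\setminus a_t)\simeq I(G_{t-3})*S^{3m}*S^{3m}\simeq\Sigma^{6m+2}I(G_{t-3})$.

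For $I(Y_t\setminus N[a_t])$ I would peel in two rounds. Now the chains are $b_{t-2,1},\dots,b_{t,n-2}$ and $c_{t-2,1},\dots,c_{t,n-2}$ (again $P_{9m+2}$), but attached to $G_{t-3}$ at $b_{t-2,1}$ and $c_{t-2,1}$; reading them as $(3\cdot 3m+2)$-strings based at these attachment vertices, Proposition~\ref{prop string} removes $a_{t-3},b_{t-3,n-1},c_{t-3,n-1}$ and detaches the chains as free $S^{3m}$-factors, leaving $W=G_{t-3}\setminus\{a_{t-3},b_{t-3,n-1},c_{t-3,n-1}\}$. Inside $W$ the residual chains $b_{t-3,2},\dots,b_{t-3,n-2}$ and $c_{t-3,2},\dots,c_{t-3,n-2}$ are $(3(m-1)+2)$-strings, so a second use of Proposition~\ref{prop string} deletes $b_{t-3,1},c_{t-3,1}$, detaches free $P_{3m-1}\simeq S^{m-1}$ factors, and leaves $G_{t-4}$. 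Hence $I(W)\simeq\Sigma^{2m}I(G_{t-4})$ and $I(Y_t\setminus N[a_t])\simeq\Sigma^{6m+2}\Sigma^{2m}I(G_{t-4})=\Sigma^{8m+2}I(G_{t-4})$.

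The main obstacle is the null-homotopy, and I expect it to be genuinely harder than in Lemma~\ref{lem 3.2.2}. The natural strategy, as there, is to find a subcomplex of $I(Y_t\setminus N[a_t])$ that is homotopy equivalent to the whole and lies in $\operatorname{star}_{I(Y_t\setminus a_t)}(u)$ for one of the extra vertices $u\in\{b_{t,n-1},c_{t,n-1}\}$, which is a cone and hence contractible; to land in such a star one must avoid $b_{t,n-2}$ (resp.\ $c_{t,n-2}$). In Lemma~\ref{lem 3.2.2} the detached path had length $\equiv 0\pmod 3$, so Corollary~\ref{cor P} allowed one to shave off its end up to homotopy and thereby avoid that neighbor; here the detached paths have length $\equiv 2\pmod 3$, so trimming an end changes the homotopy type and a single star does not suffice. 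I would therefore expect to need a more delicate reduction — exploiting the symmetric pair of free ends $b_{t,n-1},c_{t,n-1}$ together, or an auxiliary cofiber step that contracts one free end before applying the cone argument — and this is the step I would allocate the most effort to.
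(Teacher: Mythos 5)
Your framework and both homotopy computations coincide with the paper's proof: it too applies Proposition~\ref{prop cofib} at $a_t$, shows $I(Y_t\setminus a_t)\simeq\Sigma^{6m+2}I(G_{t-3})$ via the two $(9m+2)$-strings based at $b_{t-2,2}$ and $c_{t-2,2}$, and shows $I(Y_t\setminus N[a_t])\simeq\Sigma^{8m+2}I(G_{t-4})$ by first detaching the two copies of $P_{9m+2}$ (deleting $a_{t-3},b_{t-3,3m+1},c_{t-3,3m+1}$ via Proposition~\ref{prop string}) and then peeling $(3m-1)$-strings inside what you call $W$. However, your proposal does not prove the null-homotopy of $I(Y_t\setminus N[a_t])\hookrightarrow I(Y_t\setminus a_t)$; you state correctly that this is where all the difficulty lies, diagnose correctly why the argument of Lemma~\ref{lem 3.2.2} breaks (the detached paths now have $9m+2\equiv 2\pmod 3$ vertices, so trimming an end destroys $I(P_{9m+2})\simeq S^{3m}$), and then only gesture at possible fixes. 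That leaves a genuine gap, and neither of your two suggested directions is the mechanism that actually works.

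The paper's idea is the opposite of trimming: \emph{lengthen first, then trim at the far end, and aim for the star of $a_{t-3}$} rather than the stars of $b_{t,3m+1}$, $c_{t,3m+1}$. Keep your decomposition $I(Y_t\setminus N[a_t])\simeq I(P\sqcup Q\sqcup W)$, where $P,Q\cong P_{9m+2}$ are the detached chains and $W=G_{t-3}\setminus\{a_{t-3},b_{t-3,3m+1},c_{t-3,3m+1}\}$. Inside $Y_t\setminus a_t$, where $b_{t,3m+1}$ and $c_{t,3m+1}$ are available again, let $P'$ and $Q'$ be the induced subgraphs on $V(P)\cup\{b_{t,3m+1}\}$ and $V(Q)\cup\{c_{t,3m+1}\}$, so $P'\cong Q'\cong P_{9m+3}$, and set $P''=P'\setminus b_{t-2,1}$, $Q''=Q'\setminus c_{t-2,1}$. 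By Corollary~\ref{cor P} the four inclusions $I(P)\hookrightarrow I(P')\hookleftarrow I(P'')$ and $I(Q)\hookrightarrow I(Q')\hookleftarrow I(Q'')$ are homotopy equivalences, hence so are $I(P\sqcup Q\sqcup W)\hookrightarrow I(P'\sqcup Q'\sqcup W)\hookleftarrow I(P''\sqcup Q''\sqcup W)$. Now $P''\sqcup Q''\sqcup W$ contains no vertex of $N[a_{t-3}]$: the neighbors $b_{t-2,1},c_{t-2,1}$ were just trimmed away, and $b_{t-3,3m+1},c_{t-3,3m+1}$ were already deleted in forming $W$. Hence $I(P''\sqcup Q''\sqcup W)$ lies in the star of $a_{t-3}$ in $I(Y_t\setminus a_t)$, so its inclusion into $I(Y_t\setminus a_t)$ is null-homotopic; chasing the commutative diagram of inclusions into $I(Y_t\setminus a_t)$, and using that $I(P\sqcup Q\sqcup W)\hookrightarrow I(Y_t\setminus N[a_t])$ is an equivalence, shows that $I(Y_t\setminus N[a_t])\hookrightarrow I(Y_t\setminus a_t)$ is null-homotopic. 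So the missing idea is precisely this zig-zag: adjoining the neighbors of $a_t$ makes the chains of length $9m+3\equiv 0\pmod 3$, which is what licenses shaving the \emph{opposite} ends by Corollary~\ref{cor P}, and the single contractible star of $a_{t-3}$ then absorbs both chains at once. Your instinct to ``use the pair $b_{t,3m+1},c_{t,3m+1}$ together'' was pointing the right way, but their role is to extend the paths, not to supply the contractible target.
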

\begin{proof}
By Proposition \ref{prop cofib}, it suffices to show the following assertions:
\begin{itemize}
\item[(1)] The inclusion $I(Y_t \setminus N[a_t]) \hookrightarrow I(Y_t \setminus a_t)$ is null-homotopic.

\item[(2)] $I(Y_t \setminus a_t) \simeq \Sigma^{6m+2} I(G_{t-3})$

\item[(3)] $I(Y_t \setminus N[a_t]) \simeq \Sigma^{8m+2} I(G_{t-4})$
\end{itemize}
We first show (1). In $Y_t \setminus N[a_t]$, the vertices
$$b_{t,3m}, \cdots, b_{t,1}, b_{t-1,3m+1}, \cdots, b_{t-1,1}, b_{t-2,3m+1}, \cdots b_{t-2,1},$$
form a $(9m + 2)$-string $P$ based at $b_{t-2,1}$, and the vertices
$$c_{t,3m}, \cdots, c_{t,1}, c_{t-1,3m+1}, \cdots, c_{t-1,1}, c_{t-2,3m+1}, \cdots, c_{t-2,1}$$
form a $(9m + 2)$-string $Q$ based at $c_{t-2,1}$. By Proposition \ref{prop string}, we can delete the vertices $a_{t-3}, b_{t-3, 3m+1}, c_{t-3, 3m+1}$ from $Y_t \setminus N[a_t]$. Thus we have a homotopy equivalence $I(Y_t \setminus N[a_t]) \simeq I(P) * I(Q) * I(H)$, where $H$ is the graph satisfying
$$ P \sqcup Q \sqcup H = Y_t \setminus \big( N[a_t] \cup \{ a_{t-3}, b_{t-3,3m+1}, c_{t-3,3m+1}\} \big)$$
Let $P'$ be the subgraph of $Y_t \setminus a_t$ induced by $V(P) \cup b_{t, 3m+1}$ and $Q'$ the subgraph of $Y_t \setminus a_t$ induced by $V(Q) \cup c_{t, 3m}$. Put $P'' = P' \setminus b_{t-2,1}$ and $Q'' = Q' \setminus c_{t-2,1}$. Figure 5 depicts the graphs $P \sqcup Q \sqcup H$, $P' \sqcup Q' \sqcup H$, and $P'' \sqcup Q'' \sqcup H$ when $m = 0$. 

\begin{figure}[b]
\begin{center}
\begin{picture}(400,65)(0,-15)

\multiput(0,10)(0,40){2}{\circle*{3}}
\multiput(20,10)(0,40){2}{\circle*{3}}
\multiput(40,10)(0,40){2}{\circle{3}}
\multiput(60,10)(0,40){2}{\circle*{3}}
\multiput(80,10)(0,40){2}{\circle*{3}}
\multiput(100,10)(0,40){2}{\circle{3}}

\put(20,10){\line(-1,0){27}}
\put(20,50){\line(-1,0){27}}
\put(0,10){\line(1,2){20}}
\put(0,50){\line(1,-2){20}}
\put(20,10){\line(1,2){10}}
\put(20,50){\line(1,-2){10}}

\put(30,30){\circle*{3}}
\put(10,30){\circle*{3}}
\put(50,30){\circle{3}}

\put(55,28){\tiny $a_{t-3}$}

\multiput(60,10)(0,40){2}{\line(1,0){20}}

\put(20,-10){\small $P \sqcup Q \sqcup H$}

\multiput(150,10)(0,40){2}{\circle*{3}}
\multiput(170,10)(0,40){2}{\circle*{3}}
\multiput(190,10)(0,40){2}{\circle{3}}
\multiput(210,10)(0,40){2}{\circle*{3}}
\multiput(230,10)(0,40){2}{\circle*{3}}
\multiput(250,10)(0,40){2}{\circle*{3}}

\put(170,10){\line(-1,0){27}}
\put(170,50){\line(-1,0){27}}
\put(150,10){\line(1,2){20}}
\put(150,50){\line(1,-2){20}}
\put(170,10){\line(1,2){10}}
\put(170,50){\line(1,-2){10}}

\put(180,30){\circle*{3}}
\put(160,30){\circle*{3}}
\put(200,30){\circle{3}}

\multiput(210,10)(0,40){2}{\line(1,0){40}}

\put(205,28){\tiny $a_{t-3}$}

\put(170,-10){\small $P' \sqcup Q' \sqcup H$}

\multiput(300,10)(0,40){2}{\circle*{3}}
\multiput(320,10)(0,40){2}{\circle*{3}}
\multiput(340,10)(0,40){2}{\circle{3}}
\multiput(360,10)(0,40){2}{\circle{3}}
\multiput(380,10)(0,40){2}{\circle*{3}}
\multiput(400,10)(0,40){2}{\circle*{3}}

\put(320,10){\line(-1,0){27}}
\put(320,50){\line(-1,0){27}}
\put(300,10){\line(1,2){20}}
\put(300,50){\line(1,-2){20}}
\put(320,10){\line(1,2){10}}
\put(320,50){\line(1,-2){10}}

\put(330,30){\circle*{3}}
\put(310,30){\circle*{3}}
\put(350,30){\circle{3}}

\multiput(380,10)(0,40){2}{\line(1,0){20}}

\put(355,28){\tiny $a_{t-3}$}

\put(320,-10){\small $P'' \sqcup Q'' \sqcup H$}

\end{picture}

{\bf Figure 5.}
\end{center}
\end{figure}

By Corollary \ref{cor P}, the four inclusions
$$I(P) \hookrightarrow I(P') \hookleftarrow I(P''), \; I(Q) \hookrightarrow I(Q') \hookleftarrow I(Q'')$$
are homotopy equivalences. Thus we have a commutative diagram
$$\xymatrix{
I(P \sqcup Q \sqcup H) \ar[r]^{\simeq} \ar[rd] & I(P' \sqcup Q' \sqcup H) \ar[d] & I(P'' \sqcup Q'' \sqcup H) \ar[ld] \ar[l]_{\simeq} \\
{} & I(Y_t \setminus a_t) & {} 
}$$
Here $P'' \sqcup Q'' \sqcup H$ has no vertices adjacent to $a_{t-3} \in V(Y_t \setminus a_t)$. Thus the inclusion $I(P'' \sqcup Q'' \sqcup H) \hookrightarrow I(Y_t \setminus a_t)$ is null-homotopic. By the commutativity of the above diagram, the composite of the inclusions $I(P \sqcup Q \sqcup H) \xrightarrow{\simeq} I(Y_t \setminus N[a_t]) \to I(Y_t \setminus a_t)$ is null-homotopic. This completes the proof of (1).

Next we prove (2). In $Y_t \setminus a_t$, the vertices
$$b_{t, 3m+1} , \cdots, b_{t,1}, b_{t-1,3m+1}, \cdots, b_{t-1,1}, b_{t-2,3m+1}, \cdots, b_{t-2,2}$$
form a $(9m + 2)$-string based at $b_{t-2,2}$, and the vertices
$$c_{t, 3m+1} , \cdots, c_{t,1}, c_{t-1,3m+1}, \cdots, c_{t-1,1}, c_{t-2,3m+1}, \cdots, c_{t-2,2}$$
form a $(9m+2)$-string based at $c_{t-2,2}$. By Proposition \ref{prop string}, we have homotopy equivalences
$$I(Y_t \setminus a_t) \simeq I(P_{9m+2} \sqcup P_{9m+2} \sqcup G_{t-3}) = S^{3m} * S^{3m} * I(G_{t-3}) = \Sigma^{6m+2} I(G_{t-3}).$$
This completes the proof of (2).

Finally, we prove (3). Recall $I(Y_t \setminus N[a_t]) \simeq I(P) * I(Q) * I(H)$ and $P \cong Q \cong P_{9m + 2}$. Since $I(P_{9m+2}) \simeq S^{3m}$, we have that $I(Y_t \setminus N[a_t]) \simeq \Sigma^{6m+2} I(H)$. In $H$, the vertices $b_{t-3,3m}, \cdots, b_{t-3,2}$ form a $(3m-1)$-string based at $b_{t-3,2}$ and the vertices $c_{t-3,3m}, \cdots, c_{t-3,2}$ form a $(3m-1)$-string based at $c_{t-3,2}$. By Proposition \ref{prop cofib}, we can delete the vertices $b_{t-3,1}$ and $c_{t-3,1}$ from $H$. Hence we have
$$I(H) \simeq I(P_{3m-1}) * I(P_{3m-1}) * I(G_{t-4}) \simeq S^{m-1} * S^{m-1} * I(G_{t-4}) \simeq \Sigma^{2m} I(G_{t-4}).$$
Thus we have $I(Y_t \setminus N[a_t]) \simeq \Sigma^{6m+2} I(H) \simeq \Sigma^{8m+2} I(G_{t-4})$. This completes the proof.
\end{proof}

\noindent
{\it Proof of Theorem \ref{thm 4.1.1}.} Combining the Lemma \ref{lem 3.2.1}, Lemma \ref{lem 3.2.2}, and Lemma \ref{lem 3.2.3}, we have
$$I(G_t) \simeq I(X_t) \simeq I(Y_t) \vee \Sigma^{6m+2} I(G_{t-3}) \simeq \Sigma^{6m+2} I(G_{t-3}) \vee \Sigma^{6m+2} I(G_{t-3}) \vee \Sigma^{8m+3} I(G_{t-4}).$$
This completes the proof. \qed

\vspace{2mm}
Theorem \ref{thm 4.1.1} implies that $G_t$ is recursively determined by $I(G_0)$, $I(G_1)$, $I(G_2)$, and $I(G_3)$. These complexes are easily determined by Lemma \ref{lem 3.2.2} and Lemma \ref{lem 3.2.3}.

\begin{prop} \label{prop 4.1.2}
$I(G_0) \simeq \pt$, $I(G_1) \simeq S^{2m}$, $I(G_2) \simeq S^{4m+1} \vee S^{4m+1}$, $I(G_3) \simeq S^{6m+2}$
\end{prop}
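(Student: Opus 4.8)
The plan is to compute $I(G_0)$ and $I(G_1)$ by hand and to reduce both $I(G_2)$ and $I(G_3)$ to independence complexes of \emph{cycles}, which are already computed by Kozlov's formula recalled in Section 2. The guiding geometric fact is that the vertices $a_1, \dots, a_{t-1}$ are exactly the interior ``junctions'' of the tiling (each is a degree-$4$ vertex lying on two triangles), while $a_0$ and $a_t$ are degree-$2$ caps joining the $b$-row to the $c$-row at the two ends. Deleting all interior junctions therefore opens the tiling into a single chordless cycle. For the base cases: $G_0$ is one vertex, so $I(G_0) \simeq \pt$; and $G_1$ is the line graph of a single $2n$-gon, hence $G_1 \cong C_{2n} = C_{6m+4}$, and since $6m+4 = 3(2m+1)+1$ Kozlov's computation gives $I(G_1) \simeq S^{2m}$.

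For $I(G_2)$ I would first apply Lemma \ref{lem 3.2.1} to obtain $I(G_2) \simeq I(X_2)$ with $X_2 = G_2 \setminus a_1$. Since $a_1$ is the only interior junction of the two-polygon tiling, its removal destroys both triangles meeting at $a_1$, and tracing the boundary $a_0, b_{1,1}, \dots, b_{2,n-1}, a_2, c_{2,n-1}, \dots, c_{1,1}$ back to $a_0$ exhibits $X_2$ as a single cycle. Counting the $2$ caps and the $4(n-1)$ row-vertices gives $X_2 \cong C_{12m+6}$. As $12m+6 = 3(4m+2)$, Kozlov's formula yields $I(G_2) \simeq I(C_{12m+6}) \simeq S^{4m+1} \vee S^{4m+1}$.

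For $I(G_3)$ I would peel off the two interior junctions $a_2$ and $a_1$ successively. Lemma \ref{lem 3.2.1} gives $I(G_3) \simeq I(X_3)$ with $X_3 = G_3 \setminus a_2$, and Lemma \ref{lem 3.2.2} (whose hypothesis $t \ge 3$ is met) gives $I(X_3) \simeq I(Y_3) \vee \Sigma^{6m+2} I(G_0)$ with $Y_3 = G_3 \setminus \{a_1, a_2\}$. Because $I(G_0) \simeq \pt$, the suspended summand $\Sigma^{6m+2} I(G_0)$ is contractible, so $I(G_3) \simeq I(Y_3)$. Having now deleted both interior $a$-vertices, the same boundary tracing shows $Y_3$ is a chordless cycle with $2$ caps and $6(n-1)$ row-vertices, i.e. $Y_3 \cong C_{18m+8}$. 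Since $18m+8 = 3(6m+2)+2$, Kozlov's formula gives $I(G_3) \simeq I(C_{18m+8}) \simeq S^{6m+2}$.

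The one step that is not a formal citation, and the place I expect to spend the most care, is the verification of the two graph isomorphisms $X_2 \cong C_{12m+6}$ and $Y_3 \cong C_{18m+8}$. One must confirm that once the interior junctions $a_1$ (respectively $a_1$ and $a_2$) are removed, every surviving vertex has degree exactly $2$ and no chord remains, so that the leftover graph really is a chordless cycle of the stated length rather than merely something homotopy equivalent to one. This is a finite inspection of the adjacency relations, and it is precisely where the geometry of the tiling---each interior $a_i$ being the sole source of branching and of triangles---is used.
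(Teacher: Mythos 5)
Your proposal is correct and follows essentially the same route as the paper: $I(G_0)=\pt$ by definition, $G_1\cong C_{6m+4}$, then Lemma \ref{lem 3.2.1} reduces $I(G_2)$ to $I(X_2)\cong I(C_{12m+6})$, and Lemmas \ref{lem 3.2.1} and \ref{lem 3.2.2} reduce $I(G_3)$ to $I(Y_3)\vee\Sigma^{6m+2}I(G_0)\simeq I(Y_3)\cong I(C_{18m+8})$, with Kozlov's formula finishing each case. The only difference is that you spell out the verification that $X_2$ and $Y_3$ are chordless cycles of the stated lengths, which the paper simply asserts via the isomorphisms.
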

\begin{proof}
By the definition of $G_0$, it follows that $I(G_0) = \pt$. Since $G_1 \cong C_{6m+4}$, we have that $I(G_1) \simeq S^{2m}$. Since $I(G_2) \simeq I(X_2) \cong I(C_{12m+6})$, we have $I(G_2) \simeq S^{4m+1} \vee S^{4m+1}$. By Lemma \ref{lem 3.2.1} and Lemma \ref{lem 3.2.2}, we have that $I(G_3) \simeq I(X_3) \simeq I(Y_3) \vee \Sigma^{6m+2} I(G_0) \simeq I(Y_3) \cong I(C_{18m+8})$. Thus we have $I(G_3) \simeq S^{6m+2}$. This completes the proof.
\end{proof}

Finally, we determine the connectivity of $I(G_t)$. Let $k_t$ be the connectivity of $I(G_t)$. Then the following hold:

\begin{thm} \label{thm connectivity 1}
Define $s \ge 0$ and $\varepsilon = 1, 2, 3$ by $t = 3s + \varepsilon$. Then
\begin{eqnarray} \label{eqn 2}
k_t = 2mt + 2(s-1) + \varepsilon
\end{eqnarray}
\end{thm}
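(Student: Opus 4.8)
The plan is to turn the homotopy recursion of Theorem~\ref{thm 4.1.1} into a numerical recursion for $k_t := \conn I(G_t)$ and to solve it by induction on $t$. Two standard facts drive everything: suspension raises connectivity by one, so $\conn \Sigma^{j} X = \conn X + j$, and for a wedge $\conn(A \vee B) = \min\{\conn A, \conn B\}$. Both apply cleanly because Theorem~\ref{thm 4.1.1} together with Proposition~\ref{prop 4.1.2} shows that every $I(G_t)$ is a wedge of spheres, whose connectivity is just (lowest sphere dimension) $-\,1$. Feeding the decomposition
\[
I(G_t) \simeq \Sigma^{6m+2} I(G_{t-3}) \vee \Sigma^{6m+2} I(G_{t-3}) \vee \Sigma^{8m+3} I(G_{t-4})
\]
into these rules yields, for $t \ge 4$, the recursion
\[
k_t = \min\{ k_{t-3} + 6m + 2, \; k_{t-4} + 8m + 3 \},
\]
where I use the convention $k_0 = \infty$: since $I(G_0) \simeq \pt$ is contractible, the summand $\Sigma^{8m+3} I(G_0)$ is a point and drops out of the wedge.

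For the base of the induction I read $k_1, k_2, k_3$ off Proposition~\ref{prop 4.1.2}. From $I(G_1) \simeq S^{2m}$, $I(G_2) \simeq S^{4m+1} \vee S^{4m+1}$, and $I(G_3) \simeq S^{6m+2}$ I obtain $k_1 = 2m-1$, $k_2 = 4m$, $k_3 = 6m+1$, which agree with \eqref{eqn 2} for $(s,\varepsilon) = (0,1), (0,2), (0,3)$.

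For the inductive step, fix $t \ge 4$, write $t = 3s + \varepsilon$ with $\varepsilon \in \{1,2,3\}$, and assume \eqref{eqn 2} for all smaller indices. Because $t-3 = 3(s-1) + \varepsilon$, the hypothesis gives $k_{t-3} = 2m(t-3) + 2(s-2) + \varepsilon$, whence $k_{t-3} + 6m + 2 = 2mt + 2(s-1) + \varepsilon$, which is exactly the claimed value; so it only remains to check that the second term is never strictly smaller. For $\varepsilon \in \{2,3\}$ we have $t-4 = 3(s-1) + (\varepsilon - 1)$ with $\varepsilon - 1 \in \{1,2\}$, and a direct substitution gives $k_{t-4} + 8m + 3 = 2mt + 2(s-1) + \varepsilon$ as well, so the two terms coincide. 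For $\varepsilon = 1$ we have $t - 4 = 3(s-2) + 3$, and the same substitution yields $k_{t-4} + 8m + 3 = 2mt + 2(s-1) + 2$, exceeding the first term by $1$; in the boundary case $t = 4$ (so $s=1$, $t-4 = 0$) the second term is $\infty$ by the convention above and again drops out. In every case $k_t = 2mt + 2(s-1) + \varepsilon$, completing the induction.

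The argument is a routine induction with no conceptual obstacle once the wedge/suspension connectivity rules are in place. The only points demanding care are bookkeeping: correctly reparametrizing $t-3$ and $t-4$ in the $(s,\varepsilon)$-coordinates — note that subtracting $4$ shifts the value of $\varepsilon$ — and isolating the degenerate case $t=4$, where $G_{t-4}=G_0$ is contractible and must enter the recursion as $k_0 = \infty$ rather than through \eqref{eqn 2}.
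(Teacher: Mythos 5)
Your proof is correct and follows exactly the paper's own route: read the base cases $k_1, k_2, k_3$ off Proposition~\ref{prop 4.1.2}, convert Theorem~\ref{thm 4.1.1} into the recursion $k_t = \min\{k_{t-3}+6m+2,\, k_{t-4}+8m+3\}$, and induct. You have in fact supplied the bookkeeping the paper leaves implicit (``one can show \dots by induction on $s$''), including the $\varepsilon$-shift when passing to $t-4$ and the degenerate case $t=4$, where treating the contractible summand via $k_0=\infty$ is the right convention.
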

\begin{proof}
It follows from Proposition \ref{prop 4.1.2} that the equation (\ref{eqn 2}) holds for $s = 0$. By Theorem \ref{thm 4.1.1}, we have the equation
$$k_t = \min \{ k_{t-3} + 6m+2, k_{t-4} + 8m + 3\}$$
for $t \ge 4$. Using this, one can show that the equation (\ref{eqn 2}) holds for every $t$ by induction on $s$. This completes the proof.
\end{proof}

\begin{rem}
Jeli\'c Milutinovi\'c et al.~\cite{JMMV} showed that when $n = 3m+2$ and $t = 3s + \varepsilon$ with $\varepsilon = 1,2,3$, the complex $I(G_t)$ the connectivity is at least
$$2mt + t - \Big\lfloor \frac{t+1}{2}\Big\rfloor -1.$$
Thus the connectivity of $I(G_t)$ is strictly greater than their lower bound. On the other hand, in the case of $n = 3m$, we will see that their lower bound coincides the strict connectivity of $I(G_t)$.
\end{rem}

\subsection{Case $n = 3m$}

The purpose in this section is to determine the homotopy types and connectivities of $I(G_t)$ when $n = 3m$. Recall that the graph $H_t$ {is} the induced subgraph of $G_{t+1}$ whose vertex set if $V(G_t) \cup \{ b_{t+1, 1}, c_{t+1,1}\}$ (see Section 1). The recursive formulae of $I(G_t)$ are described as the following two theorems:

\begin{thm}\label{thm G_t}
For $t \ge 4$, there is a homotopy equivalence
$$I(G_t) \simeq \Sigma^{4m-2} I(H_{t-2}) \vee \Sigma^{6m-2} I(G_{t-3}) \vee \Sigma^{8m-3} I(H_{t-4}).$$
\end{thm}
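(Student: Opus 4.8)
The plan is to imitate the three-step deletion scheme of Lemmas \ref{lem 3.2.1}--\ref{lem 3.2.3}, applying Proposition \ref{prop cofib} at the three joint vertices $a_{t-1}$, $a_{t-2}$, and $a_t$ in turn, so that the three wedge summands in the statement arise one per step. Throughout I set $X_t = G_t \setminus a_{t-1}$ and $Y_t = X_t \setminus a_{t-2}$, exactly as in the case $n = 3m+2$. The essential difference is that, because $n = 3m$, the relevant path remnants fall into different residue classes mod $3$ than before, so the roles of the ``contractible'' step and the ``splitting'' steps get permuted, and --- crucially --- the string reductions now leave behind the two pendant vertices $b_{\cdot,1}$ and $c_{\cdot,1}$, which is precisely what turns the deeper summands into copies of $I(H_\bullet)$ rather than $I(G_\bullet)$.

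First I would delete $a_{t-1}$. Here $G_t \setminus N[a_{t-1}]$ splits by Proposition \ref{prop join} as the polygon-$t$ remnant $b_{t,2}, \ldots, b_{t,n-1}, a_t, c_{t,n-1}, \ldots, c_{t,2}$, isomorphic to $P_{6m-3} = P_{3(2m-1)}$, together with a graph $R$ which is $G_{t-2}$ carrying two pendant paths $b_{t-1,1}, \ldots, b_{t-1,n-2}$ and $c_{t-1,1}, \ldots, c_{t-1,n-2}$. Since $P_{6m-3}$ is a $P_{3(2m-1)}$ and not a $P_{3k+1}$, it is not contractible, so this step already produces a genuine summand. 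Stripping the two pendant paths of length $3(m-1)$ by repeated use of the simplicial-vertex lemma (Theorem \ref{thm simplicial}) --- each removal of a terminal $P_3$ keeps the base vertex and costs one suspension --- identifies $I(R) \simeq \Sigma^{2m-2} I(H_{t-2})$, and with $I(P_{6m-3}) \simeq S^{2m-2}$ I get $I(G_t \setminus N[a_{t-1}]) \simeq \Sigma^{4m-3} I(H_{t-2})$. After checking the inclusion into $I(X_t)$ is null-homotopic, Proposition \ref{prop cofib} yields $I(G_t) \simeq I(X_t) \vee \Sigma^{4m-2} I(H_{t-2})$, the first summand.

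Next I would delete $a_{t-2}$ from $X_t$. As in Lemma \ref{lem 3.2.2}, the graph $X_t \setminus N[a_{t-2}]$ has a long path component running $b_{t-1,2}, \ldots, b_{t,n-1}, a_t, c_{t,n-1}, \ldots, c_{t-1,2}$, but now of length $4n - 5 = 12m - 5 = 3(4m-2)+1$; being a $P_{3k+1}$ it is contractible, so $I(X_t \setminus N[a_{t-2}])$ is contractible and Proposition \ref{prop cofib} gives the clean equivalence $I(X_t) \simeq I(Y_t)$ with no new summand (the analogue of Lemma \ref{lem 3.2.1}). Finally I would delete $a_t$ from $Y_t$, mirroring Lemma \ref{lem 3.2.3}. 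On the one hand $I(Y_t \setminus a_t)$ is computed by trimming two $(9m-4)$-strings with Proposition \ref{prop string} (here $9m-4 = 3(3m-2)+2$, so each contributes $S^{3m-2}$) and applying Proposition \ref{prop join}, giving $S^{3m-2} * S^{3m-2} * I(G_{t-3}) \simeq \Sigma^{6m-2} I(G_{t-3})$. On the other hand $I(Y_t \setminus N[a_t]) \simeq I(P) * I(Q) * I(H)$ with $P \cong Q \cong P_{9m-4}$, where $H$ is again a $G_{t-4}$ with two pendant paths that the simplicial-vertex lemma reduces to $\Sigma^{2m-2} I(H_{t-4})$, so $I(Y_t \setminus N[a_t]) \simeq \Sigma^{8m-4} I(H_{t-4})$. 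A null-homotopy check then gives $I(Y_t) \simeq \Sigma^{6m-2} I(G_{t-3}) \vee \Sigma^{8m-3} I(H_{t-4})$, and concatenating the three equivalences proves the theorem.

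The main obstacle will be the two null-homotopy verifications (for the inclusions at $a_{t-1}$ and at $a_t$), which cannot be copied verbatim from the $n=3m+2$ proof: as in the commutative-diagram argument of Lemma \ref{lem 3.2.3}(1), one must enlarge the relevant string components to homotopy equivalent graphs (via Corollary \ref{cor P}) that avoid a fixed vertex of the ambient complex, so that the composite lands in a contractible star. The surviving pendant vertices $b_{\cdot,1}, c_{\cdot,1}$ make this bookkeeping more delicate than before; the secondary difficulty is simply tracking the residues mod $3$ throughout, since they control both the exact suspension degrees and the distinction between $I(G_\bullet)$ and $I(H_\bullet)$.
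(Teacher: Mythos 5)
Your proposal follows essentially the same route as the paper's own proof: the identical three-step cofiber-sequence scheme at $a_{t-1}$, $a_{t-2}$, $a_t$ with the same intermediate graphs $X_t$ and $Y_t$, the same identifications of the links ($P_{6m-3}$ plus $H_{t-2}$ with pendant paths, the contractible $P_{12m-5}$ component, and the two $P_{9m-4}$ strings plus $H_{t-4}$), the same suspension counts, and the same star-based null-homotopy strategy used in the paper's Lemma \ref{lem 3m.1}, Lemma \ref{lem 3m.2}, and Lemma \ref{lem 3m.3}. The only cosmetic difference is that you strip the pendant paths by iterating the simplicial-vertex lemma (Theorem \ref{thm simplicial}), removing a terminal $P_3$ per suspension, where the paper instead applies Proposition \ref{prop string} to delete the attaching vertices and then splits off $P_{3m-4}$ factors as a join; both yield $\Sigma^{2m-2}I(H_{\bullet})$, so your argument is correct.
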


\begin{thm}\label{thm H_t}
For $t \ge 1$, there is a homotopy equivalence
$$I(H_t) \simeq \Sigma^{2m} I(G_{t-1}) \vee \Sigma^{2m-1} I(H_{t-1}).$$
\end{thm}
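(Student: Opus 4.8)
The plan is to prove Theorem \ref{thm H_t} by a single application of the cofiber sequence of Proposition \ref{prop cofib} at the vertex $v = a_t$, reading off the two wedge summands from the complexes $I(H_t \setminus a_t)$ and $I(H_t \setminus N[a_t])$. Recall that in $H_t$ the vertex $a_t$ has closed neighborhood $N[a_t] = \{a_t, b_{t,3m-1}, c_{t,3m-1}, b_{t+1,1}, c_{t+1,1}\}$, and that each of $b_{t+1,1}$ and $c_{t+1,1}$ is adjacent only to $a_t$ and to $b_{t,3m-1}$ (resp. $c_{t,3m-1}$). The key structural observation is that once $a_t$ is deleted, $b_{t+1,1}$ and $c_{t+1,1}$ become leaves, and the two strands $b_{t,1}, \dots, b_{t,3m-1}, b_{t+1,1}$ and $c_{t,1}, \dots, c_{t,3m-1}, c_{t+1,1}$ become paths attached to the rest of the graph only through $b_{t,1}$ and $c_{t,1}$.

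First I would compute $I(H_t \setminus a_t)$. After removing $a_t$, the vertices $b_{t,2}, \dots, b_{t,3m-1}, b_{t+1,1}$ form a $(3m-1)$-string based at $b_{t,2}$, and symmetrically on the $c$-side. By Proposition \ref{prop string} I may delete $b_{t,1}$ and $c_{t,1}$; the two strands then split off as disjoint copies of $P_{3m-1}$, and what remains is exactly $G_{t-1}$. Since $I(P_{3m-1}) \simeq S^{m-1}$ (Kozlov), Proposition \ref{prop join} gives $I(H_t \setminus a_t) \simeq S^{m-1} * S^{m-1} * I(G_{t-1}) \simeq \Sigma^{2m} I(G_{t-1})$. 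The computation of $I(H_t \setminus N[a_t])$ is analogous: deleting $N[a_t]$ shortens each strand by two, leaving the paths $b_{t,2}, \dots, b_{t,3m-2}$ and $c_{t,2}, \dots, c_{t,3m-2}$ hanging off the surviving vertices $b_{t,1}$ and $c_{t,1}$. Peeling these off (for $m \ge 2$ by applying Proposition \ref{prop string} to the $(3m-4)$-strings based at $b_{t,3}$, $c_{t,3}$, so as to delete $b_{t,2}$, $c_{t,2}$; and trivially when $m = 1$) leaves precisely $H_{t-1} = G_{t-1} \cup \{b_{t,1}, c_{t,1}\}$ together with two copies of $P_{3m-4}$, whence $I(H_t \setminus N[a_t]) \simeq S^{m-2} * S^{m-2} * I(H_{t-1}) \simeq \Sigma^{2m-2} I(H_{t-1})$. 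Here one must check that $b_{t,1}, c_{t,1}$ are attached to $G_{t-1}$ exactly as in $H_{t-1}$, and treat $m=1$ (and $t=1$, where $G_0$ is a point and $I(H_0) \simeq S^0$) by hand.

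It remains to verify the hypothesis of Proposition \ref{prop cofib}, that the inclusion $I(H_t \setminus N[a_t]) \hookrightarrow I(H_t \setminus a_t)$ is null-homotopic, and this is exactly where the choice $v = a_t$ pays off. Since both $b_{t+1,1}$ and its unique neighbor $b_{t,3m-1}$ in $H_t \setminus a_t$ lie in $N[a_t]$, no vertex of $H_t \setminus N[a_t]$ is adjacent to $b_{t+1,1}$, and $b_{t+1,1}$ itself is not a vertex of $H_t \setminus N[a_t]$. Hence every face of $I(H_t \setminus N[a_t])$ stays independent after adjoining $b_{t+1,1}$, so this subcomplex is contained in the star of $b_{t+1,1}$ in $I(H_t \setminus a_t)$, which is contractible; the inclusion thus factors through a contractible subcomplex and is null-homotopic, just as in the argument of Lemma \ref{lem 3.2.2}. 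Proposition \ref{prop cofib} then yields
$$I(H_t) \simeq I(H_t \setminus a_t) \vee \Sigma I(H_t \setminus N[a_t]) \simeq \Sigma^{2m} I(G_{t-1}) \vee \Sigma^{2m-1} I(H_{t-1}),$$
as claimed. The step I would expect to look hardest a priori is the null-homotopy, but with $v=a_t$ it reduces cleanly to the star argument above; the genuinely error-prone part is the bookkeeping in the second paragraph, namely pinning down the exact string lengths ($3m-1$ versus $3m-4$) so that the suspension indices come out as $2m$ and $2m-1$, and confirming that the residual graphs are precisely $G_{t-1}$ and $H_{t-1}$.
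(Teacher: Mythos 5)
Your proof is correct, but it takes a genuinely different route from the paper's. The paper observes that $b_{t+1,1}$ is a \emph{simplicial} vertex of $H_t$ (its neighborhood $\{a_t, b_{t,3m-1}\}$ spans an edge) and applies Theorem \ref{thm simplicial} to it, obtaining at once $I(H_t) \simeq \Sigma I(H_t \setminus N[b_{t,3m-1}]) \vee \Sigma I(H_t \setminus N[a_t])$ with no null-homotopy to verify; it then computes $I(H_t \setminus N[b_{t,3m-1}]) \simeq \Sigma^{2m-1} I(G_{t-1})$ (peeling off a $(3m-4)$-string and a $(3m-1)$-string) and $I(H_t \setminus N[a_t]) \simeq \Sigma^{2m-2} I(H_{t-1})$, the latter exactly as in your second computation. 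You instead run the cofiber sequence of Proposition \ref{prop cofib} at $a_t$, compute $I(H_t \setminus a_t) \simeq \Sigma^{2m} I(G_{t-1})$ directly, and supply the required null-homotopy by noting that the closed neighborhood of $b_{t+1,1}$ in $H_t \setminus a_t$ is contained in $N_{H_t}[a_t]$, so $I(H_t \setminus N[a_t])$ lies in the star of $b_{t+1,1}$. Your bookkeeping is right: $3m-1 \equiv 2$ and $3m-4 \equiv 2 \pmod{3}$, so Proposition \ref{prop string} applies, the residual graphs are indeed $G_{t-1}$ and $H_{t-1}$, and the degenerate cases $m=1$ and $t=1$ are handled. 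In fact your star argument is cleaner than the analogous steps in Lemmas \ref{lem 3.2.2} and \ref{lem 3m.1}, which must first invoke Corollary \ref{cor P} to shrink a path component before the containment in a star becomes available; here the containment is immediate because the leaf $b_{t+1,1}$ and its unique surviving neighbor both lie in $N[a_t]$. What the paper's route buys is economy: the simplicial-vertex theorem packages the splitting and both suspensions in one stroke. What yours buys is uniformity and elementarity: it uses only Propositions \ref{prop cofib} and \ref{prop string} plus the star trick, the same toolkit as the paper's other recursive lemmas, at the cost of one extra (easy) verification.
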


We first show Theorem \ref{thm G_t}. Put $X_t = G_t \setminus a_{t-1}$.

\begin{lem} \label{lem 3m.1}
If $t \ge 2$, then $I(G_t) \simeq I(X_t) \vee \Sigma^{4m-{2}} I(H_{t-2})$.
\end{lem}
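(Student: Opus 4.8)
The plan is to feed the graph $G_t$ into the cofiber sequence of Proposition \ref{prop cofib} at the separating vertex $v = a_{t-1}$. Since $G_t \setminus a_{t-1} = X_t$ by definition, this at once produces the candidate splitting $I(G_t) \simeq I(X_t) \vee \Sigma I(G_t \setminus N[a_{t-1}])$, so it suffices to establish two things: (1) that the inclusion $I(G_t \setminus N[a_{t-1}]) \hookrightarrow I(X_t)$ is null-homotopic, so that Proposition \ref{prop cofib} genuinely splits the cofiber sequence; and (2) that $I(G_t \setminus N[a_{t-1}]) \simeq \Sigma^{4m-2} I(H_{t-2})$, so that the suspended factor becomes the claimed $\Sigma^{4m-1} I(H_{t-2})$.

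The first move for both parts is to describe $G_t \setminus N[a_{t-1}]$. Removing $N[a_{t-1}] = \{a_{t-1}, b_{t-1,n-1}, c_{t-1,n-1}, b_{t,1}, c_{t,1}\}$ severs both the $b$-path and the $c$-path between the $(t-1)$-st and $t$-th polygons, so the graph falls into two components $R \sqcup L$. Here $R$ is the path $b_{t,2} - \cdots - b_{t,n-1} - a_t - c_{t,n-1} - \cdots - c_{t,2}$, isomorphic to $P_{2n-3} = P_{6m-3}$, while $L$ is a copy of $G_{t-2}$ carrying two pendant paths $b_{t-1,1} - \cdots - b_{t-1,n-2}$ and $c_{t-1,1} - \cdots - c_{t-1,n-2}$ (each isomorphic to $P_{3m-2}$) attached at $a_{t-2}$. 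By Proposition \ref{prop join}, $I(G_t \setminus N[a_{t-1}]) = I(L) * I(R)$. Since $6m-3 = 3(2m-1)$, the computation of Kozlov recalled above gives $I(R) \simeq S^{2m-2}$. For the factor $I(L)$, the far endpoint of each attached path is a leaf, hence simplicial, so iterating Theorem \ref{thm simplicial} peels each path inward three vertices per step, contributing one suspension each time, until only the pendant vertices $b_{t-1,1}$ and $c_{t-1,1}$ survive; the remaining graph is precisely $H_{t-2}$. Assembling the sphere dimension of $I(R)$, the suspensions created in collapsing the two paths of $L$, and the join then delivers assertion (2).

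For the null-homotopy in (1), I would copy the star argument of Lemma \ref{lem 3.2.2}. Since $R \cong P_{6m-3} = P_{3(2m-1)}$ and deleting the endpoint gives $R \setminus b_{t,2} \cong P_{6m-4} = P_{3(2m-2)+2}$, Corollary \ref{cor P} shows the inclusion $I(R \setminus b_{t,2}) \hookrightarrow I(R)$ is a homotopy equivalence, and joining with $L$ via Proposition \ref{prop join} makes $I((R \setminus b_{t,2}) \sqcup L) \hookrightarrow I(G_t \setminus N[a_{t-1}])$ a homotopy equivalence. In $X_t$ the vertex $b_{t,1}$ has only the neighbors $b_{t,2}$ and $b_{t-1,n-1}$, neither of which belongs to $(R \setminus b_{t,2}) \sqcup L$; hence $I((R \setminus b_{t,2}) \sqcup L)$ lies in the star of $b_{t,1}$ inside $I(X_t)$, so its inclusion is null-homotopic. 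Chasing the commuting triangle then shows the original inclusion $I(G_t \setminus N[a_{t-1}]) \hookrightarrow I(X_t)$ is null-homotopic as well.

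Two points require care. The genuine obstacle is (1): one must pick the right endpoint of $R$ to remove so that the homotopy-equivalent reduced complex actually sits in a cone of $I(X_t)$, and then transport the null-homotopy back along the homotopy equivalence through the commuting triangle. The remaining subtlety is the suspension bookkeeping in (2): one must track the sphere dimension of $I(R)$ together with the suspensions produced by the two leaf-peelings so that the join lands exactly at $\Sigma^{4m-2} I(H_{t-2})$, which after the outer suspension of the cofiber sequence yields the stated $\Sigma^{4m-1} I(H_{t-2})$.
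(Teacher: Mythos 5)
Your proposal follows the paper's own argument almost step for step: the cofiber sequence of Proposition \ref{prop cofib} at $a_{t-1}$, the splitting $G_t \setminus N[a_{t-1}] = R \sqcup L$ (the paper's $P \sqcup Q$), the same null-homotopy device (delete one endpoint of the $P_{6m-3}$-component via Corollary \ref{cor P} so that the resulting homotopy-equivalent subcomplex lies in a star inside $I(X_t)$ --- you cone off at $b_{t,1}$, the paper at $c_{t,1}$), and the reduction of $L$ to $H_{t-2}$; your only methodological deviation is peeling the pendant paths by iterating Theorem \ref{thm simplicial} at leaves ($m-1$ suspensions per path) where the paper deletes $b_{t-1,2}$ and $c_{t-1,2}$ by Proposition \ref{prop string}, and both routes give $I(L) \simeq \Sigma^{2m-2} I(H_{t-2})$. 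One small inaccuracy: the pendant paths are not attached only at $a_{t-2}$; the vertex $b_{t-1,1}$ is adjacent to both $a_{t-2}$ and $b_{t-2,n-1}$ (these three vertices form a triangle in the line graph), and similarly on the $c$-side. This is harmless for your peeling, since each application of Theorem \ref{thm simplicial} removes vertices strictly inside a pendant path, and $H_{t-2}$ is by definition the \emph{induced} subgraph on $V(G_{t-2}) \cup \{b_{t-1,1}, c_{t-1,1}\}$, so the terminal graph is exactly $H_{t-2}$, triangles included.

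The genuine defect is the final suspension count in your assertion (2). From your own components, $I(R) \simeq S^{2m-2}$ and $I(L) \simeq \Sigma^{2m-2} I(H_{t-2})$, and since $S^a * X \simeq \Sigma^{a+1} X$, the join gives
$$I(G_t \setminus N[a_{t-1}]) \simeq S^{2m-2} * \Sigma^{2m-2} I(H_{t-2}) \simeq \Sigma^{4m-3} I(H_{t-2}),$$
not $\Sigma^{4m-2} I(H_{t-2})$ as you claim; after the outer suspension from the cofiber sequence your argument therefore yields $I(G_t) \simeq I(X_t) \vee \Sigma^{4m-2} I(H_{t-2})$, not the stated $\Sigma^{4m-1} I(H_{t-2})$. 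This mismatch is not a flaw you could have repaired: it exposes an off-by-one misprint in the lemma statement itself. The paper's own proof computes $I(G_t \setminus N[a_{t-1}]) \simeq \Sigma^{4m-3} I(H_{t-2})$, and every subsequent use of the lemma takes the summand to be $\Sigma^{4m-2} I(H_{t-2})$ --- see the proof of Theorem \ref{thm G_t}, and Proposition \ref{prop 3.2.5}, where $I(G_2) \simeq I(X_2) \vee \Sigma^{4m-2} I(H_0) \simeq S^{4m-2} \vee S^{4m-2}$, consistent with the main theorem; with exponent $4m-1$ one would instead get $I(G_2) \simeq S^{4m-2} \vee S^{4m-1}$, which is false. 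So your argument, carried out honestly, proves the corrected statement with $\Sigma^{4m-2}$; the value $\Sigma^{4m-2}$ you assigned to the link complex in (2) is an arithmetic slip that happens to reproduce the paper's typo rather than a correct derivation of it. You should restate the lemma with $\Sigma^{4m-2} I(H_{t-2})$ and fix the assembly line in (2) to read $\Sigma^{4m-3}$.
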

\begin{proof}
By Proposition \ref{prop cofib}, it suffices to show the following assertions:
\begin{itemize}
\item[(1)] The inclusion $I(G_t \setminus N[a_{t-1}]) \hookrightarrow I(G_t \setminus a_{t-1}) = I(X_t)$ is null-homotopic.

\item[(2)] $I(G_t \setminus N[a_{t-1}]) \simeq \Sigma^{4m-3} I(H_{t-2})$
\end{itemize}
First, we write $P$ to indicate the connected component of $G_t \setminus N[a_{t-1}]$ consisting of the vertices
$$b_{t,2}, \cdots, b_{t,3m-1}, a_t, c_{t,3m-1}, \cdots, c_{t,2}.$$
Put $P' = P \setminus c_{t,2}$ and $G_t \setminus N[a_{t-1}] = P \sqcup Q$. Since $P$ is isomorphic to $P_{6m-3}$, the inclusion $I(P') \hookrightarrow I(P)$ is a homotopy equivalence. Therefore we have that $I(P' \sqcup Q) \hookrightarrow I(P \sqcup Q) = I(G_t \setminus N[a_{t-1}])$ is a homotopy equivalence. However, $P' \sqcup Q$ has no vertices adjacent to $c_{t,1} \in V(X_t)$. Therefore, we have that the composite $I(P' \sqcup Q) \xrightarrow{\simeq} I(P \sqcup Q) \hookrightarrow I(X_t)$ is null-homotopic, and hence we have that the inclusion $I(G_t \setminus N[a_{t-1}]) = I(P \sqcup Q) \hookrightarrow I(X_t)$ is null-homotopic. This completes the proof of (1).

In $Q$, $b_{t-1, 3m-2}, \cdots, b_{t-1,3}$ is a $(3m-4)$-string based at $b_{t-1,3}$, and $c_{t-1,3m-2}, \cdots, c_{t-1,3}$ is a $(3m-4)$-string based at $c_{t-1,3}$. By Proposition \ref{prop string}, one can delete $b_{t-1,2}$ and $c_{t-1,2}$ from $Q$, and hence we have
$$I(Q) \simeq I(Q \setminus \{ b_{t-1,2}, c_{t-1,2} \}) \simeq I(P_{3m-4}) * I(P_{3m-4}) * I(H_{t-2}) = \Sigma^{2m-2} I(H_{t-2}).$$
Therefore we have
$$I(G_t \setminus N[a_{t-1}]) = I(P) * I(Q) \cong I(P_{6m-3}) * I(Q) \simeq S^{2m-2} * \Sigma^{2m-2} I(H_t) = \Sigma^{4m-3} I(H_{t-2}).$$
This completes the proof of (2).
\end{proof}

Next put $Y_t = X_t \setminus a_{t-2}$ for $t \ge 2$.

\begin{lem} \label{lem 3m.2}
If $t \ge 2$, then $I(X_t) \simeq I(Y_t)$
\end{lem}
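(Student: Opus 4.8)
The plan is to apply Proposition \ref{prop cofib} to the vertex $a_{t-2}$ of $X_t = G_t \setminus a_{t-1}$, which gives the cofiber sequence
$$I(X_t \setminus N[a_{t-2}]) \hookrightarrow I(X_t \setminus a_{t-2}) = I(Y_t) \hookrightarrow I(X_t).$$
It therefore suffices to show that $I(X_t \setminus N[a_{t-2}])$ is contractible, for then the inclusion $I(Y_t) \hookrightarrow I(X_t)$ is a homotopy equivalence. This is the same mechanism as in Lemma \ref{lem 3.2.1}; only the combinatorics of the deleted neighborhood changes.

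The graph $X_t \setminus N[a_{t-2}]$ is obtained from $G_t$ by deleting $a_{t-1}$, $a_{t-2}$, the two ends $b_{t-2,3m-1}, c_{t-2,3m-1}$ of row $t-2$, and the two ends $b_{t-1,1}, c_{t-1,1}$ of row $t-1$. The key point, contrasting with Lemma \ref{lem 3m.1}, where deleting $N[a_{t-1}]$ left a component isomorphic to $P_{6m-3}$ and hence a sphere, is that deleting $N[a_{t-2}]$ splices the partial rows $t-1$ and $t$ into a single long path. Concretely, I would check that the vertices
$$b_{t-1,2}, \ldots, b_{t-1,3m-1},\, b_{t,1}, \ldots, b_{t,3m-1},\, a_t,\, c_{t,3m-1}, \ldots, c_{t,1},\, c_{t-1,3m-1}, \ldots, c_{t-1,2}$$
span a connected component $P$ isomorphic to the path $P_{12m-5}$: the inter-row edges $b_{t-1,3m-1} \sim b_{t,1}$ and $c_{t-1,3m-1} \sim c_{t,1}$ connect the two rows along each strand, while $a_t$ joins the $b$- and $c$-strands through $a_t \sim b_{t,3m-1}$ and $a_t \sim c_{t,3m-1}$.

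With $P$ in hand the conclusion is immediate: counting gives $(3m-2)+(3m-1)+1+(3m-1)+(3m-2)=12m-5=3(4m-2)+1$ vertices, so by the computation of the independence complexes of paths in \cite{Kozlov} we have $I(P)=I(P_{12m-5})\simeq \pt$, and Proposition \ref{prop join} then forces $I(X_t \setminus N[a_{t-2}])$ to be contractible.

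I expect the only real work to be the bookkeeping that $P$ is a genuine connected component and a path, i.e. that no interior vertex has a neighbor outside $P$ and no vertex acquires degree three. The two ends $b_{t-1,2}$ and $c_{t-1,2}$ are severed from the lower rows precisely because $b_{t-1,1}, c_{t-1,1}$ and $a_{t-2}$ are gone, and the potential triangles on $\{a_{t-1}, b_{t-1,3m-1}, b_{t,1}\}$ and $\{a_{t-1}, c_{t-1,3m-1}, c_{t,1}\}$ collapse to single edges once $a_{t-1}$ is removed, so no branching occurs. Finally I would dispose of the boundary case $t=2$ separately: there $a_{t-2}=a_0$ has only the two neighbors $b_{1,1}, c_{1,1}$, and $X_2 \setminus N[a_0]$ is itself the path $P_{12m-5}$ on rows $1$ and $2$, so the argument is if anything simpler.
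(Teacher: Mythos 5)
Your proof is correct and takes essentially the same route as the paper: both apply Proposition \ref{prop cofib} at the vertex $a_{t-2}$ and observe that $X_t \setminus N[a_{t-2}]$ has a connected component isomorphic to $P_{12m-5}$, whose independence complex is contractible by Kozlov's computation, so that Proposition \ref{prop join} makes $I(X_t \setminus N[a_{t-2}])$ contractible and hence $I(Y_t) \hookrightarrow I(X_t)$ a homotopy equivalence. Your extra bookkeeping (the count $12m-5 = 3(4m-2)+1$, the verification that the component is an induced path, and the separate treatment of $t=2$) is sound and simply makes explicit what the paper leaves implicit.
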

\begin{proof}
By Proposition \ref{prop cofib}, it suffices to show that $I(X_t \setminus N[a_{t-2}])$ is contractible. In fact, $X_t \setminus N[a_{t-2}]$ has a connected component $P$ consisting of the vertices
$$b_{t-1,2}, \cdots, b_{t,3m-1}, b_{t,1}, \cdots, b_{t,3m-1} , a_t, c_{t,3m-1}, c_{t,1}, c_{t-1,3m-1}, c_{t-1,2}.$$
Then $P$ is isomorphic to $P_{12m-5}$. Since $I(P_{12m-5})$ is contractible, we have that $I(X_t \setminus N[a_{t-2}])$ is contractible.
\end{proof}

\begin{lem} \label{lem 3m.3}
If $t \ge 4$, then $I(Y_t) \simeq \Sigma^{6m-2} I(G_{t-3}) \vee \Sigma^{8m-3} I(H_{t-4})$.
\end{lem}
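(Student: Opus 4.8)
The plan is to run the same three-step analysis as in the proof of Lemma~\ref{lem 3.2.3}, applying the cofiber sequence of Proposition~\ref{prop cofib} to the endpoint vertex $a_t$ of $Y_t$. Since $N(a_t) = \{b_{t,3m-1}, c_{t,3m-1}\}$, it suffices to prove: (1) the inclusion $I(Y_t \setminus N[a_t]) \hookrightarrow I(Y_t \setminus a_t)$ is null-homotopic; (2) $I(Y_t \setminus a_t) \simeq \Sigma^{6m-2} I(G_{t-3})$; and (3) $I(Y_t \setminus N[a_t]) \simeq \Sigma^{8m-4} I(H_{t-4})$. Granting these, Proposition~\ref{prop cofib} gives $I(Y_t) \simeq I(Y_t \setminus a_t) \vee \Sigma I(Y_t \setminus N[a_t]) \simeq \Sigma^{6m-2} I(G_{t-3}) \vee \Sigma^{8m-3} I(H_{t-4})$, as claimed.

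For (2), I would note that $Y_t \setminus a_t = G_t \setminus \{a_{t-2}, a_{t-1}, a_t\}$, so the vertices $b_{t,3m-1}, \dots, b_{t,1}, b_{t-1,3m-1}, \dots, b_{t-1,1}, b_{t-2,3m-1}, \dots, b_{t-2,2}$ form a single induced path, namely a $(9m-4)$-string based at $b_{t-2,2}$ (here $9m-4 = 3(3m-2)+2$), and likewise on the $c$-side. Proposition~\ref{prop string} then deletes $b_{t-2,1}$ and $c_{t-2,1}$, after which the residual graph is exactly $G_{t-3}$; since $I(P_{9m-4}) \simeq S^{3m-2}$, Proposition~\ref{prop join} gives $I(Y_t \setminus a_t) \simeq S^{3m-2} * S^{3m-2} * I(G_{t-3}) = \Sigma^{6m-2} I(G_{t-3})$. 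For (3), in $Y_t \setminus N[a_t]$ the analogous vertices $b_{t,3m-2}, \dots, b_{t,1}, b_{t-1,3m-1}, \dots, b_{t-1,1}, b_{t-2,3m-1}, \dots, b_{t-2,1}$ now form a $(9m-4)$-string $P$ based at $b_{t-2,1}$, and the $c$-side a $(9m-4)$-string $Q$; applying Proposition~\ref{prop string} at $b_{t-2,1}$ and $c_{t-2,1}$ deletes $a_{t-3}, b_{t-3,3m-1}, c_{t-3,3m-1}$ and yields $I(Y_t \setminus N[a_t]) \simeq I(P) * I(Q) * I(H)$ for the complementary graph $H$. The new feature, compared with the $n = 3m+2$ case, is that $H$ coincides with $H_{t-4}$ except that polygon $t-3$ still carries $b_{t-3,2}, \dots, b_{t-3,3m-2}$ and their $c$-counterparts; for $m \ge 2$ these form a $(3m-4)$-string based at $b_{t-3,3}$ (resp.\ $c_{t-3,3}$), and deleting $b_{t-3,2}, c_{t-3,2}$ via Proposition~\ref{prop string} leaves precisely $H_{t-4}$, so $I(H) \simeq S^{m-2} * S^{m-2} * I(H_{t-4}) = \Sigma^{2m-2} I(H_{t-4})$; for $m = 1$ one has $H = H_{t-4}$ and the same formula holds with $\Sigma^0$. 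Combining, $I(Y_t \setminus N[a_t]) \simeq S^{3m-2} * S^{3m-2} * \Sigma^{2m-2} I(H_{t-4}) = \Sigma^{8m-4} I(H_{t-4})$.

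The delicate point is assertion (1), which I would handle by the two-sided string-surgery argument of Lemma~\ref{lem 3.2.3}. Let $P'$ (resp.\ $Q'$) be the subgraph of $Y_t \setminus a_t$ obtained from $P$ (resp.\ $Q$) by adjoining the vertex $b_{t,3m-1}$ (resp.\ $c_{t,3m-1}$) removed in passing to $N[a_t]$, so $P', Q' \cong P_{9m-3}$, and set $P'' = P' \setminus b_{t-2,1}$, $Q'' = Q' \setminus c_{t-2,1}$. By Corollary~\ref{cor P} all four inclusions $I(P) \hookrightarrow I(P') \hookleftarrow I(P'')$ and $I(Q) \hookrightarrow I(Q') \hookleftarrow I(Q'')$ are homotopy equivalences, giving a commutative diagram
\[
\xymatrix{
I(P \sqcup Q \sqcup H) \ar[r]^{\simeq} \ar[rd] & I(P' \sqcup Q' \sqcup H) \ar[d] & I(P'' \sqcup Q'' \sqcup H) \ar[ld] \ar[l]_{\simeq} \\
{} & I(Y_t \setminus a_t) & {}
}
\]
Because $b_{t-2,1}, c_{t-2,1}$ have been removed from $P'', Q''$ and $b_{t-3,3m-1}, c_{t-3,3m-1} \notin H$, the graph $P'' \sqcup Q'' \sqcup H$ has no vertex adjacent to $a_{t-3}$; hence $I(P'' \sqcup Q'' \sqcup H)$ lies in the star of $a_{t-3}$ and its inclusion is null-homotopic. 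Commutativity, together with the string-deletion equivalence $I(P \sqcup Q \sqcup H) \xrightarrow{\simeq} I(Y_t \setminus N[a_t])$, then forces the inclusion $I(Y_t \setminus N[a_t]) \hookrightarrow I(Y_t \setminus a_t)$ to be null-homotopic.

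I expect the null-homotopy (1) to be the main obstacle: one must choose the extension/retraction $P \subset P' \supset P''$ so that $I(P'')$ still computes $I(P)$ through Corollary~\ref{cor P} while $P''$ has been trimmed enough to kill every neighbour of $a_{t-3}$, and one must check these surgeries are compatible with the identifications coming from the string deletions in (3). A secondary subtlety, absent when $n = 3m+2$, is that here the residual graph reduces to $H_{t-4}$ rather than to a full tiling, producing the extra suspension $\Sigma^{2m-2}$ and requiring separate bookkeeping for the degenerate case $m = 1$.
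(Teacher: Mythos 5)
Your proposal is correct and follows essentially the same route as the paper's own proof: the same three assertions via Proposition~\ref{prop cofib} at $a_t$, the same $(9m-4)$-string deletions for (2) and (3), and the same $P\subset P'\supset P''$ surgery landing in the star of $a_{t-3}$ for the null-homotopy (1) (the paper phrases this with graphs $U_t$, $U_t'$, $U_t''$, where its definition of $U_t''$ contains a typo that your $P''=P'\setminus b_{t-2,1}$, $Q''=Q'\setminus c_{t-2,1}$ silently corrects). Your separate bookkeeping for $m=1$ is also a welcome refinement that the paper omits.
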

\begin{proof}
The proof of this lemma is almost the same as the proof of Lemma \ref{lem 3.2.3}. Put $Z_t = Y_t \setminus a_t$ and $W_t = Y_t \setminus N[a_t]$. By Proposition \ref{prop cofib}, it suffices to show the following assertions:
\begin{itemize}
\item[(1)] The inclusion $I(W_t) \hookrightarrow I(Z_t)$ is null-homotopic.

\item[(2)] $I(Z_t) \simeq \Sigma^{6m-2} I(G_{t-3})$

\item[(3)] $I(W_t) \simeq \Sigma^{8m-4} I(H_{t-4})$
\end{itemize}
In $W_t$, the vertices
$$P = b_{t, 3m-2}, \cdots, b_{t,1}, b_{t-1,3m-1}, \cdots, b_{t-1,1}, b_{t-2, 3m-1}, \cdots b_{t-2,1}$$
form a $(9m-4)$-string based at $b_{t-2,1}$, and the vertices
$$Q = c_{t, 3m-2}, \cdots, c_{t,1}, c_{t-1,3m-1}, \cdots, c_{t-1,1}, c_{t-2,3m-1}, \cdots, c_{t-2,1}$$
form a $(9m-4)$-string based at $c_{t-2,1}$. By Proposition \ref{prop cofib}, we have
$$I(W_t) \simeq I(W_t \setminus \{ a_{t-3}, b_{t-3,3m-1}, c_{t-3,3m-1}\}).$$
We put $U_t = W_t \setminus \{ a_{t-2}, b_{t-3,3m-1}, c_{t-3, 3m-1} \}$. Note that $U_t$ has two connected components
$$b_{t,3m-2}, \cdots, b_{t,1}, b_{t-1,3m-1}, \cdots, b_{t-1,1}, b_{t-2, 3m-1}, \cdots, b_{t-2, 1}$$
and
$$c_{t,3m-2}, \cdots, c_{t,1}, c_{t-1,3m-1}, \cdots, c_{t-1,1}, c_{t-2,3m-1}, \cdots, c_{t-2, 1}$$
which are isomorphic to $P_{9m-4}$. {Then define $U'_t$ to be} the induced subgraph of $Y_t$ whose vertex set is $V(U_t) \cup \{ b_{t,3m-1}, c_{t,3m-1}\}$, and $U''_t = U_t \setminus \{ b_{t-2,1}, c_{t-2,1}\}$. Since the inclusion $I(P_{9m-4}) \hookrightarrow I(P_{9m-3})$ is a homotopy equivalence, we have that the inclusions $I(U_t) \hookrightarrow I(U'_t)$ and $I(U''_t) \hookrightarrow I(U'_t)$ {are homotopy equivalences}. Thus we have the following commutative diagram:
$$\xymatrix{
I(U''_t) \ar[r]^{\simeq} \ar[rd]  & I(U'_t) \ar[d] & I(U_t) \ar[l]_{\simeq} \ar[ld] \\
{} & I(Z_t)
}$$
Here every arrow in this diagram is an inclusion. Since $U''_t$ has no vertices adjacent to $a_{t-3} \in V(Z_t)$, we have that the inclusion $I(U''_t) \hookrightarrow I(Z_t)$ is null-homotopic. Thus the above diagram {shows} that the composite of $I(U_t) \xrightarrow{\simeq} I(W_t) \hookrightarrow I(Z_t)$ is null-homotopic. This completes the proof of (1).

\begin{figure}[b]
\begin{picture}(380,120)(0,0)
\multiput(0,120)(20,0){8}{\circle*{3}}
\multiput(0,80)(20,0){8}{\circle*{3}}

\put(10,100){\circle*{3}}
\put(50,100){\circle*{3}}

\multiput(90,100)(40,0){3}{\circle{3}}

\multiput(0,80)(40,0){2}{\line(1,2){20}}
\multiput(0,120)(40,0){2}{\line(1,-2){20}}

\put(140,80){\line(-1,0){147}}
\put(140,120){\line(-1,0){147}}

\put(160,80){\circle{3}}
\put(160,120){\circle{3}}

\put(175,98){\tiny $a_t$}

\multiput(220,120)(20,0){2}{\circle*{3}}
\multiput(220,80)(20,0){2}{\circle*{3}}

\put(240,80){\line(-1,0){27}}
\put(240,120){\line(-1,0){27}}
\put(220,80){\line(1,2){20}}
\put(220,120){\line(1,-2){20}}
\put(260,80){\circle{3}}
\put(260,120){\circle{3}}

\multiput(280,80)(20,0){5}{\circle*{3}}
\multiput(280,120)(20,0){5}{\circle*{3}}

\multiput(280,80)(0,40){2}{\line(1,0){80}}
\multiput(380,80)(0,40){2}{\circle{3}}
\multiput(270,100)(40,0){4}{\circle{3}}
\put(325,125){\small $P$}
\put(325,85){\small $Q$}

\put(240,96){\small $W'_t$}

\put(-25,98){\small $W_t$}
\put(-25,28){\small $U'_t$}
\put(400,96){\small $U_t$}
\put(400,26){\small $U''_t$}

\multiput(0,50)(20,0){2}{\circle*{3}}
\multiput(0,10)(20,0){2}{\circle*{3}}
\multiput(20,10)(0,40){2}{\line(-1,0){27}}

\put(0,10){\line(1,2){20}}
\put(0,50){\line(1,-2){20}}

\multiput(60,10)(0,40){2}{\line(1,0){100}}

\multiput(50,30)(40,0){4}{\circle{3}}

\multiput(40,10)(0,40){2}{\circle{3}}

\multiput(60,10)(20,0){6}{\circle*{3}}
\multiput(60,50)(20,0){6}{\circle*{3}}

\multiput(220,50)(20,0){2}{\circle*{3}}
\multiput(220,10)(20,0){2}{\circle*{3}}

\put(240,10){\line(-1,0){27}}
\put(240,50){\line(-1,0){27}}
\put(220,10){\line(1,2){20}}
\put(220,50){\line(1,-2){20}}
\put(260,10){\circle{3}}
\put(260,50){\circle{3}}

\multiput(280,10)(0,40){2}{\circle{3}}
\multiput(300,10)(20,0){5}{\circle*{3}}
\multiput(300,50)(20,0){5}{\circle*{3}}

\multiput(300,10)(0,40){2}{\line(1,0){80}}
\multiput(270,30)(40,0){4}{\circle{3}}

\end{picture}
\begin{center}
{\bf Figure 5.}
\end{center}
\end{figure}

Next we show (3). We put $W_t = P \sqcup Q \sqcup W'_t$. Here $P$ and $Q$ are the connected components defined in the proof of (1) which are isomorphic to $P_{9m-4}$. In $W'_t$, the vertices $b_{t-3, 3m-2}, \cdots, b_{t-3,3}$ form a $(3m-4)$-string based at $b_{t-3,3}$, and the vertices $c_{t-3,3m-2}, \cdots, c_{t-3,2}$ form a $(3m-4)$-string based at $c_{t-3,3}$. Therefore we have
$$I(W'_t) \simeq I(W'_t \setminus \{ b_{t-3,2}, c_{t-3,2}\}) = I(P_{3m-4}) * I(P_{3m-4}) * I(H_{t-4}) = \Sigma^{2m-2} I(H_{t-4}).$$
Therefore we have
$$I(W_t) \simeq I(P) * I(Q) * I(W'_t) = I(P_{9m-4}) * I(P_{9m-4}) * I(W'_t) \simeq \Sigma^{8m-4} I(H_{t-4}).$$

Finally, we show (2). In $Z_t$, the vertices
$$b_{t,3m-1}, \cdots, b_{t,1}, b_{t-1,3m-1}, \cdots, b_{t-1,1}, b_{t-2,3m-1}, \cdots, b_{t,2}$$
form a $(9m-4)$-string based at $b_{t,2}$, and the vertices
$$c_{t,3m-1}, \cdots, c_{t,1}, c_{t-1,3m-1}, \cdots, c_{t-1,1}, c_{t-2,3m-1}, \cdots, c_{t,2}$$
form a $(9m-4)$-string based at $c_{t,2}$. By Proposition \ref{prop string}, we can delete the vertices $b_{t-2,1}$ and $c_{t-2,1}$. Thus we have
$$I(Z_t) \simeq I(P_{9m-4}) * I(P_{9m-4}) * I(G_{t-3}) \simeq S^{3m-2} * S^{3m-2} * I(G_{t-3}) \simeq \Sigma^{6m-2} I(G_{t-3}).$$
\end{proof}

\noindent
{\it Proof of Theorem \ref{thm G_t}.} By Lemma \ref{lem 3m.1}, Lemma \ref{lem 3m.2}, and Lemma \ref{lem 3m.3}, we have homotopy equivalences
\begin{eqnarray*}
I(G_t) & \simeq & I(X_t) \vee \Sigma^{4m-2} I(H_{t-2}) \\ 
& \simeq & I(Y_t) \vee \Sigma^{4m-2} I(H_{t-2}) \\
& \simeq & \Sigma^{6m-2} I(G_{t-3}) \vee \Sigma^{8m-3} I(H_{t-4}) \vee \Sigma^{4m-2} I(H_{t-2}).
\end{eqnarray*}
This completes the proof. \qed

\vspace{2mm}
\noindent
{\it Proof of Theorem \ref{thm H_t}.} Note that $b_{t+1,1}$ is a simplicial vertex (see Section 2 for the definition) and $N_{H_t}(b_{t+1,1}) = \{ a_t, b_{t, 3m-1}\}$. Thus by Proposition \ref{thm simplicial}, there is a homotopy equivalence
$$I(H_t) \simeq \Sigma I(H_t \setminus N[b_{t,3m-1}]) \vee \Sigma I(H_t \setminus N[a_t])$$
In $H_t \setminus N[b_{t, 3m-1}]$, the vertices $b_{t, 3m-3}, \cdots, b_{t,2}$ form a $(3m-4)$-string based at $b_{t,2}$, and the vertices $c_{t+1,1}, c_{t,3m-1}, c_{t, 3m-1}, \cdots, c_{t,2}$ {form} a $(3m-1)$-string based at $b_{t,2}$. By Proposition \ref{prop string}, we can delete the vertices $b_{t,1}$ and $c_{t,1}$, and hence we have
$$I(H_t \setminus N[b_{t, 3m-1}]) \simeq I(P_{3m-4}) * I(P_{3m-1}) * I(G_{t-1}) \simeq S^{m-2} * S^{m-1} * I(G_{t-1}) \simeq \Sigma^{2m-1} I(G_{t-1}).$$

On the other hand, in $H_t \setminus N[a_{t}]$, the vertices $b_{t,3m-2}, \cdots, b_{t,3}$ form a $(3m-4)$-string based at $b_{t,3}$, and the vertices $c_{t, 3m-2}, \cdots, c_{t,3}$ form a $(3m-4)$-string based at $c_{t,3}$. By Proposition \ref{prop string}, we can delete the vertices $b_{t,2}$ and $c_{t,2}$, and hence we have
$$I(H_t \setminus N[a_t]) \simeq I(P_{3m-4}) * I(P_{3m-4}) * I(H_{t-1}) \simeq S^{m-2} * S^{m-2} * I(H_{t-1}) \simeq \Sigma^{2m-2} I(H_{t-1}).$$
Therefore we have
$$I(H_t) \simeq \Sigma I(H_t \setminus N[b_{t,3m-1}]) \vee \Sigma I(H_t \setminus N[a_t]) \simeq \Sigma^{2m-1} I(G_{t-1}) \vee \Sigma^{2m-1} I(H_{t-1}).$$
This completes the proof. \qed

\vspace{2mm}
By Theorem \ref{thm G_t} and Theorem \ref{thm H_t}, $I(G_t)$ and $I(H_t)$ are recursively determined by $I(G_0), I(G_1), I(G_2), I(G_3)$, and $I(H_1)$.

\begin{prop} \label{prop 3.2.5}
$I(G_0) = \pt$, $I(G_1) \simeq S^{2m-1} \vee S^{2m-1}$, $I(G_2) \simeq S^{4m-2} \vee S^{4m-2}$, $I(G_3) \simeq S^{6m-2} \vee S^{6m-3}$, $I(H_0) \simeq S^0$
\end{prop}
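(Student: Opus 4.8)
The plan is to compute each of these five base cases directly, using the structural lemmas already established in this subsection together with Kozlov's computations of $I(P_k)$ and $I(C_k)$. The easiest cases are immediate: $I(G_0) = \pt$ holds by the convention that $G_0$ is a single vertex $a_0$, and $H_0$ is (by the definition of $H_t$) the induced subgraph of $G_1$ on $\{a_0, b_{1,1}, c_{1,1}\}$; since $a_0$ is adjacent to both $b_{1,1}$ and $c_{1,1}$ but $b_{1,1}$ and $c_{1,1}$ are not adjacent to each other, $H_0$ is a path $P_3$, and Kozlov's proposition gives $I(H_0) = I(P_3) \simeq S^0$. For $I(G_1)$, I would observe that $G_1 \cong C_{6m}$ (a single $(2n)$-gon with $n = 3m$ has $2n = 6m$ edges, hence its line graph is a $6m$-cycle), so $I(C_{6m}) \simeq S^{2m-1} \vee S^{2m-1}$ by Kozlov.

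For $I(G_2)$, I expect to proceed exactly as in the case $n = 3m+2$: apply Lemma \ref{lem 3m.1} and Lemma \ref{lem 3m.2} to reduce $I(G_2) \simeq I(X_2) \vee \Sigma^{4m-2} I(H_0)$ and then $I(X_2) \simeq I(Y_2)$, where $Y_2$ should turn out to be a cycle graph. Concretely, $Y_2 = G_2 \setminus \{a_1, a_0\}$ (recall $X_2 = G_2 \setminus a_1$ and $Y_2 = X_2 \setminus a_0$), and I would check that after removing both $a$-vertices the remaining graph on the $b$- and $c$-strands closes up into a single cycle $C_{12m-6}$, giving $I(Y_2) \simeq S^{4m-3} \vee S^{4m-3}$; combined with $\Sigma^{4m-2} I(H_0) \simeq \Sigma^{4m-2} S^0 = S^{4m-2}$, this should assemble into $I(G_2) \simeq S^{4m-2} \vee S^{4m-2}$ after checking that the two summands of $I(Y_2)$ and the $I(H_0)$ contribution all land in dimension $4m-2$. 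For $I(G_3)$, I would similarly invoke Lemma \ref{lem 3m.1} and Lemma \ref{lem 3m.2} to get $I(G_3) \simeq I(Y_3) \vee \Sigma^{4m-2} I(H_1)$, identify $Y_3$ as a cycle (likely $C_{18m-8}$, up to bookkeeping), and use Theorem \ref{thm H_t} to compute $I(H_1) \simeq \Sigma^{2m} I(G_0) \vee \Sigma^{2m-1} I(H_0) \simeq S^{2m} \vee S^{2m-1}$; these pieces should combine to $S^{6m-2} \vee S^{6m-3}$.

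The main obstacle, and the step requiring the most care, is the precise identification of the graphs $Y_2$ and $Y_3$ as cycle graphs of the correct length, together with tracking the suspension degrees so that the wedge summands collapse to exactly the claimed spheres. The difficulty is purely combinatorial bookkeeping: after deleting the chosen $a$-vertices one must verify that the $b$-strand and $c$-strand genuinely merge through the boundary identifications $b_{i,n-1} \sim b_{i+1,1}$ (and likewise for $c$) into one cycle rather than a path or a disjoint union, and then match $I(C_{3k+\delta})$ to the right sphere via Kozlov. The edge case $m = 1$ (where some strings have length zero and a naive count of string lengths such as $3m-4 = -1$ becomes degenerate) deserves a separate sanity check, since several of the general lemmas in this subsection implicitly assume $m$ large enough for the relevant $(3m-4)$-strings to be nonempty; for the base cases one can fall back on directly recognizing small cycles. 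Once the cycle lengths are pinned down, the remaining computations are routine applications of the already-proved structural results.
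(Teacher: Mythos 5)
Your treatment of $I(G_0)$, $I(H_0)$, and $I(G_1)$ is correct and agrees with the paper, but your computations of $I(G_2)$ and $I(G_3)$ contain genuine errors in the graph identifications and in one suspension, and as written the wedges you produce do not equal the claimed answers. First, $Y_2 = G_2 \setminus \{a_0, a_1\}$ is not a cycle: deleting \emph{both} $a$-vertices that close up the two strands leaves the path $b_{1,1}, \dots, b_{1,3m-1}, b_{2,1}, \dots, b_{2,3m-1}, a_2, c_{2,3m-1}, \dots, c_{1,1}$ on $12m-3$ vertices, i.e.\ $Y_2 \cong P_{12m-3}$, so $I(Y_2) \simeq S^{4m-2}$ (since $12m-3 = 3(4m-1)$), not $S^{4m-3} \vee S^{4m-3}$. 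With your identification, $I(G_2)$ would come out as $S^{4m-3} \vee S^{4m-3} \vee S^{4m-2}$, contradicting the statement; with the corrected identification your route does work. (The paper takes a shorter route here: it stops after Lemma \ref{lem 3m.1}, observes $X_2 \cong C_{12m-2}$, and uses $I(C_{12m-2}) \simeq S^{4m-2}$, never invoking Lemma \ref{lem 3m.2} for $t=2$.)

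Second, for $I(G_3)$ you make two compounding mistakes. The cycle is $Y_3 = G_3 \setminus \{a_1, a_2\} \cong C_{18m-4}$ (count: $a_0$, $a_3$, and $2 \cdot 3 \cdot (3m-1)$ strand vertices), giving $I(Y_3) \simeq S^{6m-2}$; your guess $C_{18m-8}$ would give $S^{6m-4}$. More seriously, $I(H_1) \simeq \Sigma^{2m} I(G_0) \vee \Sigma^{2m-1} I(H_0) \simeq S^{2m-1}$, \emph{not} $S^{2m} \vee S^{2m-1}$: since $I(G_0) = \pt$ is contractible, $\Sigma^{2m} I(G_0)$ is contractible and contributes nothing to the wedge --- you treated $\Sigma^{2m} \pt$ as if it were $\Sigma^{2m} S^0$. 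With your values the pieces assemble to $S^{6m-4} \vee S^{6m-2} \vee S^{6m-3}$ rather than the claimed $S^{6m-2} \vee S^{6m-3}$; with the corrected values ($I(Y_3) \simeq S^{6m-2}$ and $\Sigma^{4m-2} I(H_1) \simeq S^{6m-3}$) the computation matches the paper's proof exactly. Your caveat about the degenerate case $m=1$ is fair, but it concerns the proofs of the cited lemmas, not the assembly above; the substantive gaps are the two misidentified graphs and the contractible-suspension slip.
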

\begin{proof}
Since $G_0 = \pt$, $H_0 = P_3$, and $G_1 = C_{6m}$, we have that $I(G_0) = \pt$, $I(G_1) \simeq S^{2m-1} \vee S^{2m-1}$, and $I(H_0) \simeq S^0$. By Lemma \ref{lem 3m.1}, we have
$$I(G_2) \simeq I(X_2) \vee \Sigma^{4m-2} I(H_0) \simeq I(C_{12m-2}) \vee S^{4m-2} \simeq S^{4m-2} \vee S^{4m-2}.$$
By Theorem \ref{thm H_t}, we have $I(H_1) \simeq \Sigma^{2m} I(G_0) \vee \Sigma^{2m-1} I(H_0) \simeq S^{2m-1}$. It follows from Lemma \ref{lem 3m.1} and Lemma \ref{lem 3m.2} that
$$I(G_3) \simeq I(X_3) \vee \Sigma^{4m-2} I(H_1) \simeq I(Y_3) \vee \Sigma^{4m-2} I(H_1) = S^{6m-2} \vee S^{6m-3}$$
since $I(Y_3) = I(C_{18m-4}) \simeq S^{6m-2}$. This completes the proof.
\end{proof}

Finally, we determine the connectivity of the complexes $I(G_t)$ and $I(H_t)$. Put $k_t = \conn(I(G_t))$ and $l_t = \conn(I(H_t))$. In \cite{JMMV}, Jeli\'c {Milutinovi\'c} et al.~showed that $k_t \ge (2m-1)t - 1$. In fact, the following {holds}:

\begin{thm} \label{thm connectivity 2}
For $t\ge 1$, the following equation holds:
\begin{eqnarray} \label{eqn 3.2.1}
k_t = l_t = (2m-1)t - 1
\end{eqnarray}
\end{thm}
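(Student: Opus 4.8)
The plan is to translate the homotopy recursions of Theorem~\ref{thm G_t} and Theorem~\ref{thm H_t} into recursions for the connectivities and then verify the closed formula~(\ref{eqn 3.2.1}) by induction on $t$. Since every $I(G_t)$ and $I(H_t)$ is a wedge of spheres, its connectivity equals one less than the smallest dimension of a sphere appearing in it. Because $\Sigma^j$ raises every sphere dimension by $j$, so that $\conn(\Sigma^j Z) = \conn(Z) + j$, and a wedge satisfies $\conn(Z \vee Z') = \min\{\conn(Z), \conn(Z')\}$, the two homotopy recursions immediately yield the connectivity recursions
$$k_t = \min\{\, l_{t-2} + (4m-2),\ k_{t-3} + (6m-2),\ l_{t-4} + (8m-3)\,\} \quad (t \ge 4),$$
$$l_t = \min\{\, k_{t-1} + 2m,\ l_{t-1} + (2m-1)\,\} \quad (t \ge 1).$$

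For the base cases I would read the connectivities off Proposition~\ref{prop 3.2.5} and the computation of $I(H_1)$ in its proof. One has $\conn(I(H_0)) = \conn(S^0) = -1$, which agrees with $(2m-1)\cdot 0 - 1$; and $k_1 = 2m-2$, $k_2 = 4m-3$, $k_3 = 6m-4$, $l_1 = 2m-2$ (from $I(H_1) \simeq S^{2m-1}$), together with the two direct checks $l_2 = 4m-3$ and $l_3 = 6m-4$ obtained by feeding the previous values into the $l$-recursion. In every case these equal $(2m-1)t - 1$, so the formula holds for $t \le 3$ and also for the auxiliary value $l_0 = -1 = (2m-1)\cdot 0 - 1$.

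For the inductive step I would assume~(\ref{eqn 3.2.1}) for all indices smaller than $t$, with $t \ge 4$, including $l_0 = -1$. Substituting $l_{t-2} = (2m-1)(t-2)-1$, $k_{t-3} = (2m-1)(t-3)-1$, and $l_{t-4} = (2m-1)(t-4)-1$ into the $k$-recursion makes the three terms equal to $(2m-1)t - 1$, $(2m-1)t$, and $(2m-1)t$ respectively, so $k_t = (2m-1)t - 1$, the minimum being attained by the $I(H_{t-2})$-summand. Likewise, substituting $k_{t-1} = l_{t-1} = (2m-1)(t-1)-1$ into the $l$-recursion gives the two terms $(2m-1)t$ and $(2m-1)t - 1$, so $l_t = (2m-1)t - 1$, attained by the $I(H_{t-1})$-summand. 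This closes the induction and proves the theorem.

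There is no deep obstacle here; the only points requiring care are the exceptional values at small $t$. I must use $\conn(S^0) = -1$ for $I(H_0)$, and note that $I(G_0)$ is contractible so that the term $k_0 + 2m$ places no constraint on $l_1$ (consistent with $I(H_1) \simeq S^{2m-1}$ directly). These are harmless precisely because in each minimum the sphere of smallest dimension always originates from an $H$-summand, so the connectivity is governed by the $H$-recursion; verifying that the $G$-summands never lower the minimum is the routine arithmetic displayed above.
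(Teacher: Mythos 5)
Your proposal is correct and follows essentially the same route as the paper: both convert the wedge/suspension decompositions of Theorems~\ref{thm G_t} and~\ref{thm H_t} into the min-recursions $k_t = \min\{l_{t-2}+4m-2,\, k_{t-3}+6m-2,\, l_{t-4}+8m-3\}$ and $l_t = \min\{k_{t-1}+2m,\, l_{t-1}+2m-1\}$, verify the small cases from Proposition~\ref{prop 3.2.5}, and close the induction with the same arithmetic. If anything, you are slightly more careful than the paper at the boundary, since the step computing $k_4$ genuinely requires $l_0 = \conn(S^0) = -1$ and the step computing $l_1$ requires discarding the contractible summand $\Sigma^{2m} I(G_0)$, both of which you make explicit while the paper leaves them implicit.
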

\begin{proof}
By Proposition \ref{prop 3.2.5}, the equation (\ref{eqn 3.2.1}) holds for $k_1$, $k_2$, $k_3$, and since $I(H_1) = S^{2m-1}$ (see the proof of Proposition \ref{prop 3.2.5}),  the equation (\ref{eqn 3.2.1}) holds for {$l_1$}. Thus it suffices to show the following:
\begin{itemize}
\item[(a)] Suppose $t \ge 4$. If the equation (\ref{eqn 3.2.1}) holds for $t-2, t-3$, and $t-4$, then the equation (\ref{eqn 3.2.1}) holds for $k_t$.

\item[(b)] Suppose $t \ge 2$. If the equation (\ref{eqn 3.2.1}) holds for $t-1$, then the equation holds for $l_t$.
\end{itemize}

By Theorem \ref{thm G_t}, the equation
$$k_t = \min \{ l_{t-2} + 4m-2, \; k_{t-3} + 6m-2, \; l_{t-4} + 8m-3\},$$
holds for $t \ge 4$. If the equation \ref{eqn 3.2.1} holds for $t-1$, $t-2$, and $t-3$, then we have
$$k_t = \min \{ (2m-1)t-1, (2m-1)t, (2m - 1)t\} = (2m-1) t - 1.$$
Thus (a) follows. Next we prove (b). By Theorem \ref{thm H_t}, the equation
$$l_t = \min \{ k_{t-1} + 2m, l_{t-1} + 2m-1\}$$
holds for $t \ge 1$. If the equation \ref{eqn 3.2.1} holds for $t -1$, then we have
$$l_t = \min \{ (2m-1) t, (2m-1)t - 1\} = (2m-1)t - 1.$$
This completes the proof.
\end{proof}

In particular, we determine the precise connectivity of $(1 \times 1 \times t)$-honeycomb graph. Jeli\'c Milutinovi\'c et al.~\cite{JMMV} shows that the matching complex of $(1 \times 1 \times t)$-honeycomb graph is $(t-1)$-connected (see Corollary 4.4 in \cite{JMMV}). Thus we show that $(1 \times 1 \times t)$-honeycomb graph is not $t$-connected:

\begin{cor} \label{cor conn honeycomb}
The connectivity of $(1 \times 1 \times t)$-honeycomb graph is $t-1$.
\end{cor}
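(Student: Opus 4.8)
The plan is to recognize that this corollary is an immediate specialization of Theorem \ref{thm connectivity 2} to the relevant parameters, so the main work is purely bookkeeping rather than topology. The $(1 \times 1 \times t)$-honeycomb graph is the graph $P_{n,t}$ of $t$ hexagons, i.e.\ the case $2n = 6$, so $n = 3$. Writing $n = 3m$ as in Subsection 3.3, this means $m = 1$. Its matching complex is $M(P_{3,t}) = I(L(P_{3,t})) = I(G_{3,t})$, which in the abbreviated notation of the section is exactly $I(G_t)$ with $m = 1$.

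First I would record that, since $n = 3m$ with $m = 1$, Theorem \ref{thm connectivity 2} applies directly and gives $k_t = \conn(I(G_t)) = (2m-1)t - 1$. Substituting $m = 1$ yields $k_t = (2\cdot 1 - 1)t - 1 = t - 1$ for every $t \ge 1$. By definition of connectivity this means $I(G_t)$ is $(t-1)$-connected but not $t$-connected, which is precisely the assertion that the connectivity equals $t-1$.

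The only point worth emphasizing in the writeup is how this sharpens the known result: Jeli\'c Milutinovi\'c et al.~\cite{JMMV} had already established the lower bound that the complex is $(t-1)$-connected (their Corollary 4.4), and the content of our statement is the matching upper bound, namely that it fails to be $t$-connected. That upper bound is exactly what Theorem \ref{thm connectivity 2} delivers, since the equation $k_t = (2m-1)t-1$ is an equality, not merely an inequality.

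There is essentially no obstacle here, as the corollary is a direct numerical consequence of an already-proven theorem; the only thing to be careful about is the identification of the honeycomb graph with the $n = 3$ (equivalently $m = 1$) instance of $P_{n,t}$, which was already fixed in the introduction where it is stated that the graph of $t$ $6$-gons is the $(1 \times 1 \times t)$-honeycomb graph.

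\begin{proof}
The $(1 \times 1 \times t)$-honeycomb graph is the graph $P_{3,t}$ of $t$ hexagons, so its matching complex is $I(G_{3,t}) = I(G_t)$ with $n = 3$, i.e.\ $m = 1$. By Theorem \ref{thm connectivity 2}, the connectivity of $I(G_t)$ equals $(2m-1)t - 1$, which for $m = 1$ is $t-1$. Hence the connectivity of the matching complex of the $(1 \times 1 \times t)$-honeycomb graph is exactly $t-1$.
\end{proof}
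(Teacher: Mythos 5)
Your proposal is correct and is exactly the paper's argument: the paper also treats this corollary as an immediate specialization of Theorem \ref{thm connectivity 2}, identifying the $(1\times 1\times t)$-honeycomb graph with the graph of $t$ hexagons ($n=3$, hence $m=1$) so that the connectivity is $(2m-1)t-1 = t-1$, sharpening the lower bound of Jeli\'c Milutinovi\'c et al. No further comment is needed.
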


\end{document}